\newtheorem{thm}{Theorem}
\newtheorem*{thm*}{Theorem}
\newtheorem{lem}[thm]{Lemma}
\newtheorem{clm}[thm]{Claim}
\newcommand{\llll}[1] {\left #1}
\newcommand{\rrrr}[1] {\right #1}
\newcommand{\dddd}[2]{\dfrac{#1}{#2}}
\newcommand{\aaaa}{\alpha}
\newcommand{\tttt}{\tau}
\newcommand{\ssss}{\sigma}
\newcommand{\dddddd}{\delta}
\newcommand{\llllll}{\lambda}
\newcommand{\bbbb}{\beta}
\newcommand{\GGGG}{\Gamma}
\newcommand{\gggg}{\gamma}
\newcommand{\oooo}{\omega}
\newcommand{\zzzz}{\zeta}
\title{{\bf \normalsize APPROXIMATIONS FOR THE CAPUTO DERIVATIVE (II) }}
\author{Yuri Dimitrov$^1$, Venelin Todorov$^2$, Radan Miryanov$^3$}
\date{}
\begin{document}
 \maketitle
 \begin{abstract}
 In the present paper we use the expansion formula of the polylogarithm function to construct approximations of the Caputo derivative which are  related to the midpoint approximation of the integral in the definition of the Caputo derivative.  The asymptotic expansion formula of the Riemann sum approximation of the beta function and the first terms of the expansion formulas of the approximations of the Caputo derivative of the power function are obtained in the paper.
 The induced shifted approximations of the Gr\"unwald formula and the approximations of the Caputo derivative studied in the first part of the paper are constructed and applied for numerical solution of fractional differential equations.\\
{\it MSC 2010\/}:  65D30, 26A33, 34A08, 42A38, 65D32.\\
{\it Key Words and Phrases}: fractional derivative, approximation, Fourier transform, asymptotic expansion formula,  fractional differential equation.
 \end{abstract}

\section{Introduction}
The finite difference schemes for numerical solution of ordinary and partial fractional differential equations (FDEs) involve approximations of the fractional derivatives. The fractional integral of order $\aaaa>0$ and the Caputo fractional derivative  of order $\aaaa$, where $0<\aaaa<1$   are defined as
\begin{align}\label{CD}
&I^\aaaa y(t)=\dddd{1}{\GGGG(\aaaa)}\int_0^t (t-x)^{\aaaa-1} y(x)d x,\nonumber\\
&y^{(\aaaa)}(t)=D^{\aaaa} y(t)=\dddd{1}{\Gamma (1-\aaaa)}\int_0^t \dfrac{y'(x)}{(t-x)^{\aaaa}}d x.
\end{align}
		The  Caputo derivative of the exponential function is expressed with the Mittag-Leffler function as  $D^\aaaa e^t=t^{1-\aaaa}E_{1,2-\aaaa}(t)$. The Caputo derivatives of the sine and cosine functions satisfy
$$ D^\aaaa \sin t=t^{1-\aaaa}E_{2,2-\aaaa}\llll(-t^2\rrrr),\; D^\aaaa \cos t=-t^{2-\aaaa}E_{2,3-\aaaa}\llll(-t^2\rrrr).
$$
 Let $h=t/n$, where $n$ is a positive integer, and $t_m=m h$, $y_m=y(t_m)$.  The L1 approximation for the Caputo derivative is a  commonly used approximation for numerical solution of fractional differential equations \cite{Dimitrov2016,JinLazarovZhou2016,Ma2014}.
\begin{equation}\label{2_1}
  \dfrac{1}{\GGGG(2-\aaaa)h^\alpha}\sum_{k=0}^{n} \ssss_k^{(\alpha)} y_{n-k}=y^{(\alpha)}_n+O\llll(h^{2-\aaaa}\rrrr),
\end{equation}
where $\ssss_0^{(\alpha)}=1$,  $\ssss_n^{(\alpha)}=(n-1)^{1-\aaaa}-n^{1-\aaaa}$ and
$$\ssss_k^{(\alpha)}=(k+1)^{1-\alpha}-2k^{1-\alpha}+(k-1)^{1-\alpha}, \quad (k=2,\cdots,n-1).$$
 The weights $\ssss_k^{(\alpha)}$ of the L1 approximation have the following  properties 
\begin{align}\label{2_2}
&(i)\quad\ssss_0^{(\alpha)}>0,\; \ssss_1^{(\alpha)}<\ssss_2^{(\alpha)}<\cdots<\ssss_k^{(\alpha)}<\cdots<\ssss_{n-1}^{(\alpha)}<0,\;\ssss_{n}^{(\alpha)}<0,\nonumber\\
&(ii)\quad\sum_{k= 0}^n \ssss_k^{(\alpha)} = 0,\quad \sum_{k= 1}^n k \ssss_k^{(\alpha)} = -n^{1-\aaaa},\\
&(iii)\quad\ssss_k^{(\alpha)}= \dddd{C_1}{k^{1+\aaaa}}+O\llll(\dddd{1}{k^{2+\aaaa}} \rrrr),\quad \ssss_n^{(\alpha)}= \dddd{C_2}{n^\aaaa}+O\llll(\dddd{1}{n^{1+\aaaa}} \rrrr),\nonumber
\end{align}
where $C_1=\aaaa(\aaaa-1)$ and $C_2=\aaaa-1$. When the function $y\in C^2[0,t_n]$, the L1 approximation of the Caputo derivative has an order $2-\alpha$ (\cite{LinXu2007}).
 The L1 approximation is constructed by using a linear interpolation of the integrand function on all subintervals $[t_{m-1},t_m]$.  Approximations of fractional integrals and derivatives is an active research field. Higher order approximations of the fractional derivative which use a Lagrange polynomial or spline interpolation of the integrand function  are studied by Odibat \cite{Odibat2006},  Li, Chen and Ye \cite{LiChenYe2011}, Sousa \cite{Sousa2012}, Chen and Deng \cite{ChenDeng2014},  Gao, Sun and Zhang \cite{GaoSunZhang2014}, Yan, Pal and Ford \cite{YanPalFord2014}, Alikhanov \cite{Alikhanov2015}, Li, Cao and Li \cite{LiCaoLi2016}, Kumar, Pandey and Sharma \cite{KumarPandeySharma2018}. Algorithms for computation of the Caputo derivative in $O(N \ln^2 N)$ time are studied by Jiang et al. \cite{JiangZhangZhangZhang2017}, Yan, Sun and Zhang \cite{YanSunZhang2017}, Ren, Mao and Zhang \cite{RenMaoZhang2018}. The two-term equation is a basic ordinary FDE. When $0<\aaaa<1$ the two-term FDE is called fractional relaxation equation.
\begin{equation}\label{TwoTerm1}
y^{(\aaaa)}(t)+y(t)=F(t),\quad y(0)=y_0.
\end{equation} 
Suppose that 
\begin{equation}\tag{*}
\dddd{1}{h^\aaaa}\sum_{k=0}^{n}\llllll_k^{(\aaaa)} y_{n-k}= y_{n}^{(\aaaa)}+O\llll(h^{\bbbb(\aaaa)}\rrrr)
\end{equation}
is an approximation of the Caputo derivative.  The numerical solution of the two term equation \eqref{TwoTerm1} of order $\bbbb(\aaaa)$, which uses  approximation (*) of the Caputo derivative is computed  with $u_0=y_0$ and \cite{Dimitrov2016,Dimitrov2018, DimitrovDimovTodorov2018}
\begin{align}\label{Ns1}\tag{NS1(*)}
 u_n=\dddd{1}{\llllll_0^{(\aaaa)}+h^\aaaa} \llll(h^\aaaa F_n  -\sum_{k=1}^{n}\llllll_k^{(\aaaa)} u_{n-k}\rrrr).
\end{align}

In the first part of the paper \cite{Dimitrov2018} we study the numerical solutions of the  two-term equations
\begin{align}\label{Equation1}
y^{(\aaaa)}(t)+y(t)=1&+t+t^2+t^3+t^4+\dddd{t^{1-\aaaa}}{\GGGG(2-\aaaa)}\\
&+\dddd{2 t^{2-\aaaa}}{\GGGG(3-\aaaa)}
+\dddd{6 t^{3-\aaaa}}{\GGGG(4-\aaaa)}+\dddd{24 t^{4-\aaaa}}{\GGGG(5-\aaaa)},\quad y(0)=1,\nonumber
\end{align}		
\begin{align} \label{Equation2}
y^{(\aaaa)}(t)+y(t)=e^t+t^{1-\aaaa}E_{1,2-\aaaa}(t),\quad y(0)=1,
\end{align}
\begin{equation} \label{Equation3}
y^{(\aaaa)}(t)+y(t)=\cos (2\pi t)-4 \pi^2 t^{1-\aaaa}
E_{2,3-\aaaa}(-4\pi^2 t^2),\quad  y(0)=1.
\end{equation}
Two-term equations \eqref{Equation1}, \eqref{Equation2}, \eqref{Equation3}  have the  solutions 
$y(t)=1+t+t^2+t^3+t^4,\; y(t)=e^t,\; y(t)=\cos (2\pi t).$
 The numerical results for the error and the order of numerical solution NS1\eqref{2_1}, which uses the L1 approximation of the Caputo derivative of two-term equation \eqref{Equation1} and $\aaaa=0.25$, equation \eqref{Equation2},$\aaaa=0.5$ and equation \eqref{Equation3}, $\aaaa=0.75$ are presented in Table 1. The errors of the numerical solutions are computed on the interval $[0,1]$ with respect to the maximum $l_\infty$ norm. 
The Riemann zeta function is a special function defined as
		\begin{equation*}
	\zzzz(\aaaa)=
	\displaystyle{\sum_{n=1}^\infty \dddd{1}{n^\aaaa}}, \;(\aaaa>1),
	\quad	\zzzz(\aaaa)=\displaystyle{\dfrac{1}{1-2^{1-\aaaa}}\sum_{n=1}^\infty \dddd{(-1)^{n-1}}{n^\aaaa}}, \;  (\aaaa>0).  
	\end{equation*}
The Euler-Mclaurin formula 
\begin{align}
h\llll(\dddd{y(0)}{2}+\sum_{k=1}^{n-1}y(k h)+ \dddd{y(t)}{2} \rrrr)=&\int_0^t y(\xi)d\xi+\\
\sum_{n=1}^\infty 2(-1)^{n+1}\zzzz(2n)&\llll(y^{(2n-1)}(t)- y^{(2n-1)}(0) \rrrr)\llll(\dddd{h}{2\pi}\rrrr)^{2n}\nonumber
\end{align}
and the  expansion formulas of the approximations of the fractional integrals and derivatives involve the values of the zeta function.
In \cite{Dimitrov2016,Dimitrov2017} we use the formula for sum of powers
\begin{equation*}
\sum_{k=1}^{n-1}k^\aaaa=\zzzz(-\aaaa)+\dddd{n^{1+\aaaa}}{1+\aaaa}
\sum_{m=0}^{\infty}\binom{\aaaa+1}{m}\dddd{B_m}{n^m}.
\end{equation*}
 to derive the  expansion formula of the L1 approximation 
\begin{align*}
\dddd{1}{\GGGG(2-\aaaa)h^\aaaa}\sum_{k=0}^n \ssss_k^{(\aaaa)} y(t-k h)=y^{(\aaaa)}(t)+&\dddd{\zzzz(\aaaa-1)}{\GGGG(2-\aaaa)}y''(t)h^{2-\aaaa}+O\llll(h^2\rrrr),
\end{align*}
and a second-order approximation for the Caputo derivative by modifying the first three weights of the L1 approximation with the value of the zeta function $\zzzz (\aaaa-1)$. 
\begin{equation}\label{2_4}
y^{(\aaaa)}_n=\dddd{1}{\GGGG(2-\aaaa)h^\aaaa}\sum_{k=0}^n \dddddd_k^{(\aaaa)} y_{n-k}+O\llll(h^{2}\rrrr),
\end{equation}
	\noindent
where $\dddddd_k^{(\aaaa)}=\ssss_k^{(\aaaa)}$ for $2\leq k\leq n$ and
$$\dddddd_0^{(\aaaa)}=\ssss_0^{(\aaaa)}-\zzzz(\aaaa-1),\; \dddddd_1^{(\aaaa)}=\ssss_1^{(\aaaa)}+2\zzzz(\aaaa-1),\; \dddddd_2^{(\aaaa)}=\ssss_2^{(\aaaa)}-\zzzz(\aaaa-1).$$

The Riemann zeta function is a special case $(t=1)$ of the  polylogarithm function defined  as 
$$Li_\aaaa(t)=\sum_{n=1}^{\infty}\dddd{t^n}{n^\aaaa}=t+\dddd{t^2}{2^\aaaa}+\cdots+\dddd{t^n}{n^\aaaa}+\cdots\qquad (|t|<1).$$
The polylogarithm function has properties 
\begin{align}
&Li_\aaaa(t)+Li_\aaaa(-t)=2^{1-\aaaa}Li_\aaaa(t^2),\label{3_1}\\
&Li_\aaaa(t)=\GGGG(1-\aaaa)\llll(\ln \dddd{1}{t}\rrrr)^{\aaaa-1}+\sum_{n=0}^{\infty}\dddd{\zzzz(\aaaa-n)}{n!}\llll(\ln t\rrrr)^n,\label{3_2}
\end{align}
where $\aaaa \neq 1,2,3,\cdots$ and $|\ln t|<2\pi$. From \eqref{3_2} with $t=e^{i \oooo h}$ we obtain
\begin{align}\label{4_1}
Li_\aaaa\llll(e^{i w h}\rrrr)=\GGGG(1-&\aaaa)(-i w)^{\aaaa-1}h^{\aaaa-1}+\zzzz(\aaaa)-(-i w)\zzzz(\aaaa-1)h\\
&+(-i w)^2\dddd{\zzzz(\aaaa-2)}{2}h^2-(-i w)^3\dddd{\zzzz(\aaaa-3)}{6}h^3+O\llll(h^4\rrrr).\nonumber
\end{align}
In   \cite{Dimitrov2018} we use \eqref{4_1} to obtain approximations of the Caputo derivative and their expansion formulas:
\begin{align}\label{4_2}
\dddd{1}{2\GGGG(1-\aaaa)h^\aaaa}\sum_{k=0}^{n}\ssss_k^{(\aaaa)} y_{n-k}=y_n^{(\aaaa)}+O\llll( h^{2-\aaaa} \rrrr),
\end{align}
where $\ssss_0^{(\aaaa)}=1-2\zzzz(\aaaa),\; \ssss_1^{(\aaaa)}=\dddd{1}{2^\aaaa}+2\zzzz(\aaaa)$ and
$$\ssss_k^{(\aaaa)}=\dddd{1}{(k+1)^\aaaa}-\dddd{1}{(k-1)^\aaaa},\quad (k=2,\cdots,n).$$
Approximation \eqref{4_2} has an order $2-\aaaa$ when the function $y(t)\in C^2[0,t_n]$ and satisfies the condition $y(0)=y'(0)=0$. The approximation is extended to all functions of the class $C^2[0,t_n]$ by applying a modification of the last two weights $\ssss_{n-1}^{(\aaaa)}$ and $\ssss_{n}^{(\aaaa)}$:
$$\ssss_{n-1}^{(\aaaa)}=-\dddd{1}{(n-2)^\aaaa}-2 \left(\sum _{k=1}^{n-1} \frac{1}{k^\aaaa}-\frac{n^{1-\aaaa}}{1-\aaaa}-\zzzz (\aaaa)\right),$$
$$\ssss_{n}^{(\aaaa)}=-\dddd{1}{(n-1)^\aaaa}+2 \left(\sum _{k=1}^{n-1} \frac{1}{k^\aaaa}-\frac{n^{1-\aaaa}}{1-\aaaa}-\zzzz (\aaaa)\right).$$
\begin{equation}\label{4_3}
\dddd{1}{\GGGG(-\aaaa)h^\aaaa}\sum_{k=0}^{n}\ssss_k^{(\aaaa)} y_{n-k}= y_n^{(\aaaa)}+O\llll(h^{2-\aaaa}\rrrr),
\end{equation}
where $\ssss_0^{(\aaaa)}=\zzzz(\aaaa)-\zzzz(1+\aaaa),\; \ssss_1^{(\aaaa)}=1-\zzzz(\aaaa)$ and
$$\ssss_k^{(\aaaa)}=\dddd{1}{k^{1+\aaaa}},\quad (k=2,\cdots,n),$$
and modified weights  $\ssss_{n-1}^{(\aaaa)}$ and $\ssss_{n}^{(\aaaa)}$:
$$\ssss_{n-1}^{(\aaaa)}=\dddd{1}{(n-1)^{1+\aaaa}}-\dddd{n^{1-\aaaa}}{\aaaa(1-\aaaa)}+n\llll(\zzzz(1+\aaaa)-\sum_{k=1}^{n-1}\dddd{1}{k^{1+\aaaa}} \rrrr)-
\llll(\zzzz(\aaaa)-\sum_{k=1}^{n-1}\dddd{1}{k^{\aaaa}} \rrrr),$$
$$\ssss_{n}^{(\aaaa)}=(1-n)\llll(\zzzz(1+\aaaa)-\sum_{k=1}^{n-1}\dddd{1}{k^{1+\aaaa}} \rrrr)+\llll(\zzzz(\aaaa)-\sum_{k=1}^{n-1}\dddd{1}{k^{\aaaa}} \rrrr)+\dddd{n^{1-\aaaa}}{\aaaa(1-\aaaa)}.$$
The exponential Fourier transform of the function $y$ is defined as
$$\mathcal{F}[y(t)](w)=\hat{y}(w)=\int_{-\infty}^{\infty}e^{i w t}y(t)d t.
$$
The Fourier transform has properties $\mathcal{F}[y(t-b)](w)=e^{i w b} \hat{y}(w)$ and
$$\mathcal{F}[D^\aaaa y(t)](w)=(-iw)^\aaaa \hat{y}(w),\quad \mathcal{F}[I^\aaaa y(t)](w)=(-iw)^{-\aaaa} \hat{y}(w).$$
The generating function and the expansion formulas of an approximation are related to the Fourier transform of the approximation.  Approximations for the Caputo derivative, constructed from the properties of the Fourier transform and the generating function of the approximation, are studied by Tadjeran, Meerschaert and Scheffer \cite{TadjeranMeerschaertScheffer2006}, Tian, Zhou and Deng \cite{TianZhouDeng2015}, Ding and Li \cite{DingLi2016,DingLi2017}, Dimitrov \cite{Dimitrov2018}, Ren and Wang \cite{RenWang2017}.
 In section 3 we use Fourier transform and  \eqref{4_1} to construct approximations of the Caputo derivative and their expansion formulas which are related to the midpoint approximation of the fractional integral \eqref{CD} in the definition of the Caputo derivtive.
\begin{align}\label{5_1}
\dddd{1}{\GGGG(1-\aaaa)h^\aaaa}\sum_{k=0}^{n} \ssss_k^{(\aaaa)} y_{n-k}= y_n^{(\aaaa)}+O\llll( h^{2-\aaaa} \rrrr),
\end{align}
where
\begin{align*}
\ssss_0^{(\aaaa)}=2^\aaaa-\llll(2^\aaaa-1\rrrr)\zzzz(\aaaa),\quad 
\ssss_1^{(\aaaa)}=\llll(\dddd{2}{3}\rrrr)^\aaaa-2^\aaaa+\llll(2^\aaaa-1\rrrr)\zzzz(\aaaa),
\end{align*}
$$\ssss_{k}^{(\aaaa)}=2^\aaaa\llll(\dddd{1}{(2k+1)^\aaaa}-\dddd{1}{(2k-1)^\aaaa}\rrrr),\quad (k=2,\cdots,n),$$
and modified weights  $\ssss_{n-1}^{(\aaaa)}$ and $\ssss_{n}^{(\aaaa)}$:
$$\ssss_{n-1}^{(\aaaa)}=\dddd{2^\aaaa}{(2n-1)^\aaaa}-\dddd{2^\aaaa}{(2n-3)^\aaaa}+\frac{n^{1-\aaaa}}{1-\aaaa}-\zzzz(\aaaa)-2^\aaaa\llll(\sum _{k=1}^{n} \dddd{1}{(2k-1)^\aaaa}-\zzzz (\aaaa)\rrrr),$$
$$ \ssss_{n}^{(\aaaa)}=-\dddd{2^\aaaa}{(2n-1)^\aaaa}+\zzzz(\aaaa)-\frac{n^{1-\aaaa}}{1-\aaaa}+2^\aaaa\llll(\sum _{k=1}^{n} \dddd{1}{(2k-1)^\aaaa}-\zzzz (\aaaa)\rrrr).$$
\begin{align}\label{5_2}
\dddd{1}{\GGGG(1-\aaaa)h^\aaaa}\sum_{k=0}^{n} \dddddd_k^{(\aaaa)} y_{n-k}= y_n^{(\aaaa)}+O\llll( h^{2} \rrrr),
\end{align}
where  $\dddddd_0^{(\aaaa)}=2^\aaaa+\dddd{3}{2}\llll(1-2^\aaaa\rrrr)\zzzz(\aaaa)+\llll( 2^{\aaaa-1}-1\rrrr)\zzzz(\aaaa-1),$
$$\dddddd_1^{(\aaaa)}=2^\aaaa\llll(\dddd{1}{3^\aaaa}-1 \rrrr)-2\llll(1-2^\aaaa\rrrr)\zzzz(\aaaa)-2\llll( 2^{\aaaa-1}-1\rrrr)\zzzz(\aaaa-1),$$
$$\dddddd_2^{(\aaaa)}=2^\aaaa\llll(\dddd{1}{5^\aaaa}-\dddd{1}{3^\aaaa}\rrrr)+\dddd{1}{2}\llll(1-2^\aaaa\rrrr)\zzzz(\aaaa)+\llll( 2^{\aaaa-1}-1\rrrr)\zzzz(\aaaa-1),$$
$$\dddddd_{k}^{(\aaaa)}=\ssss_{k}^{(\aaaa)},\qquad (3\leq k \leq n),$$
where $\ssss_{k}^{(\aaaa)}$ are the weights of approximation \eqref{5_1}. The L1 approximation  \eqref{2_1} and approximations \eqref{4_2}, \eqref{4_3}, \eqref{5_1}  have an order $2-\aaaa$ and their weights satisfy \eqref{2_2}. The properties of the weights are used in the proofs for the convergence of the numerical solutions as well as to determine and improve the accuracy of the approximations when $n$ is small. 
The accuracy of the numerical solutions of order $2-\aaaa$ which use approximations \eqref{2_1},\eqref{4_2},\eqref{4_3} and \eqref{5_1} for the Caputo derivative depends on the truncations error of the approximations and the absolute value of the  coefficient $C(\aaaa)$ of the term $C(\aaaa) y''(t)h^{2-\aaaa}$ in the asymptotic expansion formulas. Approximations \eqref{2_1},\eqref{4_2},\eqref{4_3} and \eqref{5_1} have coefficients 
$$C_2(\aaaa)=\frac{\zzzz (\aaaa-1)}{\Gamma (2-\aaaa)},C_{13}(\aaaa)=\frac{\zzzz (\aaaa)-2\zzzz (\aaaa-1)}{2\Gamma (1-\aaaa)},C_{14}(\aaaa)=\frac{\zzzz (\aaaa)-\zzzz (\aaaa-1)}{2\Gamma (-\aaaa)},$$
$$C_{15}(\aaaa)=\frac{\llll(2-2^{\aaaa}\rrrr)\zzzz (\aaaa-1)-\llll(2^{\aaaa}-1\rrrr)\zzzz (\aaaa)}{2\Gamma (1-\aaaa)}.$$
\begin{figure}[ht]
  \centering
  \caption{Graph of the  absolute values of coefficients $C_2(\aaaa)$, $C_{13}(\aaaa)$, $C_{14}(\aaaa)$ and $C_{15}(\aaaa)$ for $0<\aaaa<1$.} 
  \includegraphics[width=0.6\textwidth]{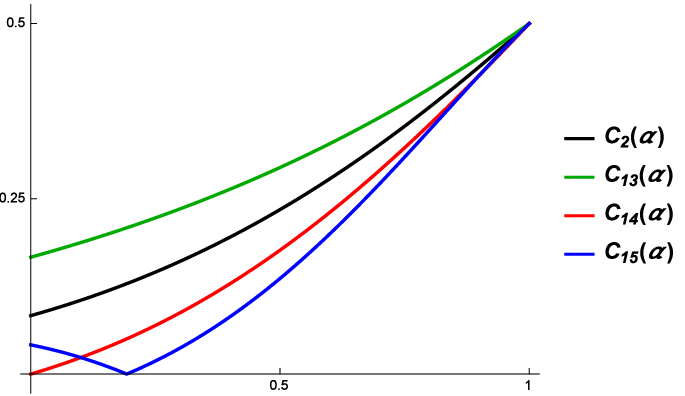}
\end{figure}\\

The absolute values of the coefficients  $C_{14}(\aaaa)$ and $C_{15}(\aaaa)$ are smaller than  the absolute value $|C_1(\aaaa)|$ of the coefficient of the L1 approximation and  $|C_{13}(\aaaa)|>|C_1(\aaaa)|$ (Figure 1).
In the first part of this paper \cite{Dimitrov2018} we show that the errors of numerical solution NS1\eqref{4_3} of two-term  equations \eqref{Equation1},  \eqref{Equation2} and \eqref{Equation3} are  smaller than the errors  of numerical solution NS1\eqref{2_1}, while the corresponding errors of numerical solution NS1\eqref{4_2} are larger. The same pattern is observed when we compare the accuracy of numerical solution NS1\eqref{5_1} to the accuracy of numerical solutions NS1\eqref{2_1} and NS1\eqref{4_2} for standard FDEs.  The errors of numerical solution NS\eqref{5_1} of equations \eqref{Equation1},  \eqref{Equation2} and \eqref{Equation3} are smaller than the errors of numerical solution NS1\eqref{2_1}, which uses the L1 approximation of the Caputo derivative (Table 1 and Table 3).
 In section 4 we construct a finite difference scheme for numerical solution of the fractional subdiffusion equation, which uses approximation \eqref{5_1} of the Caputo derivative and we analyze the convergence of the scheme.
In section 5 we derive the expansion formula of the Riemann sum approximation of the beta function and  the asymptotic expansion formula
\begin{align}\label{AEFB}
\sum_{k=1}^{n-1} k^\bbbb(n-k)^\aaaa=\dddd{\GGGG(\aaaa+1)\GGGG(\bbbb+1)}{\GGGG(\aaaa+\bbbb+2)}&n^{\aaaa+\bbbb+1}+\sum_{k=0}^{\infty} (-1)^k\binom{\bbbb}{k}\zzzz(-\aaaa-k)n^{\bbbb-k}\nonumber\\
+\sum_{k=0}^{\infty}& (-1)^k\binom{\aaaa}{k}\zzzz(-\bbbb-k)n^{\aaaa-k}.
\end{align}
The Gr\"uwald formula approximation of the Caputo derivative has a right endpoint expansion:
\begin{equation*}
\dddd{1}{h^\aaaa}\sum_{k=0}^{N-1} (-1)^k\binom{\aaaa}{k}y(t-k h)=y^{(\aaaa)}(t)+\sum_{k=1}^{M-1}\dddd{B_k^{(-\aaaa)}(-\aaaa)}{k!}y^{(k+\aaaa)}(t)h^k+O\llll( h^M\rrrr),
\end{equation*}
where $B^{(\bbbb)}(t)$ are the generalized Bernoulli polynomials.
When $y(0)=0$ the Gr\"unwald formula approximates the Caputo derivative with a first order accuracy. The Gr\"unwald formula is a second order approximation of the Caputo derivative with a shift parameter $s=S_0(\aaaa)=\aaaa/2$, when the function $y(t)$ satisfies   $y(0)=y'(0)=0$.  Tian, Zhou and Deng \cite{TianZhouDeng2015} construct a class of second order   shifted Gr\"unwald difference (WSGD) approximations. The consruction of the WSGD operators uses a weighted average of shifted Gr\"unwald formula approximations.  Alekhanov \cite{Alikhanov2015} constructs a shifted approximation of the Caputo derivative of order $2-\aaaa$. The approximation is called $\text{L2-1}_s$ formula and has an optimal shift value $s=\aaaa/2$ (in the notations of this paper), where the approximation has an order $3-\aaaa$. In \cite{DimitrovMiryanovTodorov2018} we construct   shifted approximations of order $2-\aaaa$ and two which have a shift parameter $s=\aaaa/2$.  In section 5 and section 6 we use \eqref{AEFB} to show that the first term of the left endpoint expansions of approximations \eqref{2_1}, \eqref{4_2}, \eqref{4_3}, \eqref{5_1} and the shifted Gr\"unwald formula approximation of the power function $t^\bbbb$ is $C h^{1+\bbbb}$, where $C=\zzzz(-\bbbb) t^{-1-\aaaa}/\GGGG(-\aaaa)$.
 In section 6 we extend the method from \cite{DimitrovMiryanovTodorov2018} and we construct the induced shifted approximations of the Caputo derivative of approximations \eqref{2_1}, \eqref{4_2}, \eqref{4_3}, \eqref{5_1} and the Gr\"unwald formula approximation. The induced shifted approximations of  \eqref{2_1}, \eqref{4_2}, \eqref{4_3}, \eqref{5_1} have an order $2-\aaaa$ and a second order accuracy at the optimal shift values. The induced shifted  Gr\"unwald formula approximation has a second order accuracy for an arbitrary shift value and a third order accuracy at the optimal shift value.  In the special case when the shift value is equal to zero the induced Gr\"unwald formula approximation is a second order approximation of the Caputo derivative  for all functions  $y\in C^2[0,t_n]$.
\begin{equation}\label{SOGLA}
\dddd{1}{h^\aaaa}\sum_{k=0}^{N}\gggg_k^{(\aaaa)}y_{n-k}=y_{n}^{(\aaaa)}+O(h^2),
\end{equation}
where 
$$\gggg_k^{(\aaaa)}=(-1)^k \binom{\aaaa}{k}\frac{\aaaa^2 +3 \aaaa-2 k +2}{2 (\aaaa-k+1)},\quad (k=0,\cdots,n-2),$$
$$\gggg_{n-1}^{(\aaaa)}=(-1)^{n-1}\binom{\aaaa-1}{n-2}\frac{2 \aaaa^3-\aaaa^2 n +4 \aaaa^2 -5 \aaaa n+6 \aaaa+2 n^2 -4 n}{2 (\aaaa-1) (\aaaa-n+2)}+\dddd{n^{1-\aaaa}}{\GGGG(2-\aaaa)},
$$
$$\gggg_n^{(\aaaa)}=(-1)^n\binom{\aaaa-1}{n-2}\frac{\aaaa^2 +\aaaa-2 n +2}{2 (\aaaa-1)}-\dddd{n^{1-\aaaa}}{\GGGG(2-\aaaa)}.$$ 

\section{Approximation for the Caputo derivative of order 4-\textalpha} 
%$\boldsymbol{4-\aaaa}$}
Approximation \eqref{4_2} is constructed in \cite{Dimitrov2018} and it is related to the trapezoidal approximation for the fractional integral in the definition of the Caputo derivative \eqref{CD}. The absolute value of the coefficient $C_{13}(\aaaa)$ of the term of order ${2-\aaaa}$ in the expansion formula of approximation \eqref{4_2} is greater than the absolute value of the coefficient $C_{2}(\aaaa)$ of the L1 approximation.  In this section we use the Fourier transform and \eqref{4_1} to derive approximations \eqref{5_1} and \eqref{5_2} for the Caputo derivative, and their expansion formulas of order $4-\aaaa$, which are related to the the midpoint approximation of the fractional integral in the definition of the Caputo derivative. The absolute value of the coefficient $C_{15}(\aaaa)$ of the term of order $2-\aaaa$ in the expansion formula of approximation \eqref{5_1} is smaller than $|C_2(\aaaa)|$. 
From the midpoint approximation for the fractional integral in the definition of the Caputo derivative:
$$y^{(\aaaa)}(t)\approx \dddd{h}{\GGGG(1-\aaaa)}\sum_{k=1}^{n}
\dddd{y'\llll(t_{k-1/2} \rrrr)}{(t-t_{k-1/2})^\aaaa}=\dddd{h^{1-\aaaa}}{\GGGG(1-\aaaa)}\sum_{k=1}^{n}\dddd{y'_{k-1/2}}{(n-k+1/2)^\aaaa}.$$
By approximating $y'_{k-1/2}\approx (y_k-y_{k-1})/{h}$ we obtain
$$y^{(\aaaa)}(t)\approx \dddd{2^\aaaa h^{1-\aaaa}}{\GGGG(1-\aaaa)}\sum_{k=1}^{n}\dddd{y'_{k-1/2}}{(2n-2k+1)^\aaaa}\approx 
\dddd{2^\aaaa }{\GGGG(1-\aaaa) h^\aaaa}\sum_{k=1}^{n}\dddd{y_k-y_{k-1}}{(2n-2k+1)^\aaaa}.$$
Substitute $K=n-k$
$$\GGGG(1-\aaaa)\llll(\dddd{h}{2}\rrrr)^\aaaa y^{(\aaaa)}(t)\approx  \sum_{K=0}^{n-1}\dddd{y_{n-K}-y_{n-K+1}}{(2K+1)^\aaaa}
=   \sum_{k=0}^{n-1}\dddd{y_{n-K}}{(2K+1)^\aaaa}-\sum_{k=0}^{n-1}\dddd{y_{n-K+1}}{(2K+1)^\aaaa},$$
\begin{equation}\label{F18}
y^{(\aaaa)}(t)\approx \dddd{2^\aaaa}{\GGGG(1-\aaaa) h^\aaaa} \llll(\sum_{k=0}^{n-1}\dddd{y_{n-k}}{(2k+1)^\aaaa}-\sum_{k=1}^{n}\dddd{y_{n-k}}{(2k-1)^\aaaa}\rrrr).
\end{equation}
Now we use Fourier transform and \eqref{4_1} to obtain the asymptotic expansion formula of approximation \eqref{F18} of order $4-\aaaa$. Denote
\begin{align}\label{8_1}
S^{(\aaaa)}_n[y(t)]=\sum_{k=0}^{n-1}\dddd{y_{n-k}}{(2k+1)^\aaaa}-\sum_{k=1}^{n}\dddd{y_{n-k}}{(2k-1)^\aaaa}.
\end{align}
\begin{align}\label{8_2}
S^{(\aaaa)}_n[y(t)]=y_n+\sum_{k=1}^{n-1}\llll(\dddd{1}{(2k+1)^\aaaa}-\dddd{1}{(2k-1)^\aaaa}\rrrr)y_{n-k}-\dddd{y_0}{(2n-1)^\aaaa}.
\end{align}
 By applying  Fourier transform to \eqref{8_1}: 
$$\mathcal{F}[S^{(\aaaa)}_\infty[y(t)]](w)=\llll(\sum_{k=0}^{\infty}\dddd{e^{iwkh}}{(2k+1)^\aaaa}-\sum_{k=1}^{\infty}\dddd{e^{iwkh}}{(2k-1)^\aaaa}  \rrrr)\hat{y}(w).
$$
Denote $W=e^{i w h/2}$. 
\begin{align*}
\mathcal{F}[S^{(\aaaa)}_\infty[y(t)]](w)&=\llll(\sum_{k=0}^{\infty}\dddd{W^{2k}}{(2k+1)^\aaaa}-\sum_{k=1}^{\infty}\dddd{W^{2k}}{(2k-1)^\aaaa}  \rrrr)\hat{y}(w),\\
&=\hat{y}(w)\llll(\dddd{1}{W}-W\rrrr)\sum_{k=1}^{\infty}\dddd{W^{2k-1}}{(2k-1)^\aaaa}.
\end{align*}
From \eqref{3_1}
\begin{align*}
2\sum_{k=1}^{\infty}\dddd{W^{2k-1}}{(2k-1)^\aaaa}&=Li_\aaaa(W)-Li_\aaaa(-W) =2Li_\aaaa(W)-Li_\aaaa(W)-Li_\aaaa(-W)\\
&=2Li_\aaaa(W)-2^{1-\aaaa}Li_\aaaa\llll(W^2\rrrr).
\end{align*}
Hence
$$\mathcal{F}[S^{(\aaaa)}_\infty[y(t)]](w)=\hat{y}(w)\llll(e^{-\frac{i w h}{2}}-e^{\frac{i w h}{2}}\rrrr)\llll(Li_\aaaa(e^{\frac{i w h}{2}})-\dddd{1}{2^\aaaa}Li_\aaaa\llll(e^{i w h}\rrrr)\rrrr). $$
The exponential function satisfies
\begin{align}\label{9_1}
e^{-\frac{i w h}{2}}-e^{\frac{i w h}{2}}=(-i w h)+\dddd{(-i w h)^3}{24} +O\llll(h^5\rrrr).
\end{align}
From \eqref{4_1} the function  $Li_\aaaa\llll(e^{\frac{i w h}{2}}\rrrr)$ has a fourth-order series expansion
\begin{align}\label{9_2}
Li_\aaaa\llll(e^{\frac{i w h}{2}}\rrrr)=\dddd{\GGGG(1-\aaaa)}{2^{\aaaa-1}}&(-i w h)^{\aaaa-1}+\zzzz(\aaaa)-(-i w)\dddd{\zzzz(\aaaa-1)}{2}h\\
+(-i w)^2&\dddd{\zzzz(\aaaa-2)}{8}h^2-(-i w)^3\dddd{\zzzz(\aaaa-3)}{48}h^3+O\llll(h^4\rrrr).\nonumber
\end{align}
From \eqref{4_1} and \eqref{9_2} we obtain
\begin{align}\label{9_3}
Li_\aaaa\llll(e^{\frac{i w h}{2}}\rrrr)&-\dddd{1}{2^\aaaa}Li_\aaaa\llll(e^{i w h}\rrrr)=\dddd{\GGGG(1-\aaaa)}{2^{\aaaa}}(-i w)^{\aaaa-1}h^{\aaaa-1}
+\llll(1-\dddd{1}{2^\aaaa}\rrrr)\zzzz(\aaaa)\nonumber\\
-(-i& w)\llll(\dddd{1}{2}-\dddd{1}{2^\aaaa}\rrrr)\zzzz(\aaaa-1)h
+(-i w)^2\dddd{1}{2}\llll(\dddd{1}{4}-\dddd{1}{2^\aaaa}\rrrr)\zzzz(\aaaa-2)h^2\nonumber\\
-(-i& w)^3\dddd{1}{6}\llll(\dddd{1}{8}-\dddd{1}{2^\aaaa}\rrrr)\zzzz(\aaaa-3)h^3+O\llll(h^4\rrrr).
\end{align}
From \eqref{9_1} and \eqref{9_3}:
\begin{align}\label{F24}
\mathcal{F}[S^{(\aaaa)}_\infty&[y(t)]](w)/\hat{y}(w)=\dddd{\GGGG(1-\aaaa)}{2^{\aaaa}}(-i w h)^{\aaaa}
+(-i w)\llll(1-\dddd{1}{2^\aaaa}\rrrr)\zzzz(\aaaa)h\nonumber\\
&-(-i w)^2\llll(\dddd{1}{2}-\dddd{1}{2^\aaaa}\rrrr)\zzzz(\aaaa-1)h^2+\dddd{\GGGG(1-\aaaa)}{24.2^\aaaa}(-i w h)^{2+\aaaa}\\
&+(-i w)^3\llll(\dddd{1}{24}\llll(1-\dddd{1}{2^\aaaa}\rrrr)+\dddd{1}{2}\llll(\dddd{1}{4}-\dddd{1}{2^\aaaa}\rrrr)\rrrr)\zzzz(\aaaa-2)h^3+O\llll(h^4\rrrr).\nonumber
\end{align}
By applying inverse Fourier transform to \eqref{F24}  we obtain the asymptotic expansion formula of order $4-\aaaa$
\begin{align}\label{9_4}
\llll(\dddd{2}{h} \rrrr)^\aaaa& S_n[y]= \GGGG(1-\aaaa)y_n^{(\aaaa)}+2^\aaaa\llll(1-\dddd{1}{2^\aaaa}\rrrr)\zzzz(\aaaa)y'_n h^{1-\aaaa}-\nonumber\\
&\qquad 2^\aaaa \llll(\dddd{1}{2}-\dddd{1}{2^\aaaa}\rrrr)\zzzz(\aaaa-1)y''_n h^{2-\aaaa}+
+\dddd{\GGGG(1-\aaaa)}{24}\dddd{d^2}{dt^2}y_n^{(\aaaa)} h^2\\
& 2^\aaaa \llll(\dddd{1}{24}\llll(1-\dddd{1}{2^\aaaa}\rrrr)\zzzz(\aaaa)+\dddd{1}{2}\llll(\dddd{1}{4}-\dddd{1}{2^\aaaa}\rrrr)\zzzz(\aaaa-2)\rrrr)y'''_n h^{3-\aaaa}+O\llll(h^{4-\aaaa}\rrrr).\nonumber
\end{align}
Let
\begin{equation}\label{10_2}
\mathcal{A}^{(\aaaa)}_n[y]=\dddd{1}{\GGGG(1-\aaaa)h^\aaaa}\sum_{k=0}^{n}\oooo_k^{(\aaaa)} y_{n-k}=y_n^{(\aaaa)}+O\llll( h^{1-\aaaa} \rrrr).
\end{equation}
where
$$\oooo_0^{(\aaaa)}=2^\aaaa,\;  \oooo_k^{(\aaaa)}=2^\aaaa\llll(\dddd{1}{(2k+1)^\aaaa}-\dddd{1}{(2k-1)^\aaaa}\rrrr),\; \oooo_n^{(\aaaa)}=-\dddd{2^\aaaa}{(2n-1)^\aaaa},$$
for $k=1,\cdots,n-1$.
From \eqref{9_4} and  the properties of the inverse Fourier transform we obtain the asymptotic expansion formula of approximation \eqref{10_2} for the Caputo derivative.
\begin{lem} Let $y(0)=y'(0)=y''(0)=0$. Then
\begin{align}\label{10_1}
&\dddd{1}{\GGGG(1-\aaaa)h^\aaaa}\sum_{k=0}^{n}\oooo_k^{(\aaaa)} y_{n-k}= y_n^{(\aaaa)}+\dddd{1}{\GGGG(1-\aaaa)}\llll(2^\aaaa-1\rrrr)\zzzz(\aaaa)y'_n h^{1-\aaaa}-\nonumber\\
 &\qquad \dddd{2^{\aaaa-1}-1}{\GGGG(1-\aaaa)} \zzzz(\aaaa-1)y''_n h^{2-\aaaa}
+\dddd{1}{24}\dddd{d^2}{dt^2}y_n^{(\aaaa)} h^2+\\
&\dddd{1}{\GGGG(1-\aaaa)}\llll(\dddd{1}{24}\llll(2^\aaaa-1\rrrr)\zzzz(\aaaa)+\dddd{1}{2}\llll(2^{\aaaa-2}-1\rrrr)\zzzz(\aaaa-2)\rrrr)y'''_n h^{3-\aaaa}+O\llll(h^{4-\aaaa}\rrrr).\nonumber
\end{align}
\end{lem}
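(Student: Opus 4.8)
The plan is to treat \eqref{10_1} as a repackaging of the Fourier-space expansion \eqref{9_4}, whose genuine analytic content has already been produced by the Fourier transform computation; what remains is an algebraic identification plus a justification of the inversion step. First I would record the identity connecting the weighted sum in \eqref{10_1} to $S_n[y]$. From the definition of $\oooo_k^{(\aaaa)}$ and the closed form \eqref{8_2} one checks termwise that $\sum_{k=0}^n \oooo_k^{(\aaaa)} y_{n-k}=2^\aaaa S_n[y]$: the weight $\oooo_0^{(\aaaa)}=2^\aaaa$ multiplies $y_n$, the interior weights reproduce $2^\aaaa$ times the telescoping coefficients $(2k+1)^{-\aaaa}-(2k-1)^{-\aaaa}$, and $\oooo_n^{(\aaaa)}=-2^\aaaa/(2n-1)^\aaaa$ matches the boundary term $-y_0/(2n-1)^\aaaa$. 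Hence $\llll(\dddd{2}{h}\rrrr)^\aaaa S_n[y]=\dddd{1}{h^\aaaa}\sum_{k=0}^n \oooo_k^{(\aaaa)} y_{n-k}$, so dividing \eqref{9_4} through by $\GGGG(1-\aaaa)$ immediately produces the left-hand side of \eqref{10_1}.

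Next I would simplify the four nonzero coefficients on the right, which is routine bookkeeping. The factor $2^\aaaa(1-2^{-\aaaa})=2^\aaaa-1$ converts the $y'_n h^{1-\aaaa}$ term into $(2^\aaaa-1)\zzzz(\aaaa)/\GGGG(1-\aaaa)$; likewise $2^\aaaa(\tfrac12-2^{-\aaaa})=2^{\aaaa-1}-1$ handles the $y''_n h^{2-\aaaa}$ term; the $\GGGG(1-\aaaa)$ in the $h^2$ term cancels to leave $\tfrac1{24}\tfrac{d^2}{dx^2}y_n^{(\aaaa)}$; and distributing $2^\aaaa$ through the bracket of the $y'''_n h^{3-\aaaa}$ term yields $\tfrac1{24}(2^\aaaa-1)\zzzz(\aaaa)+\tfrac12(2^{\aaaa-2}-1)\zzzz(\aaaa-2)$. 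This agrees with \eqref{10_1} coefficient by coefficient, so the statement follows once \eqref{9_4} is in hand.

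The real substance therefore lies in \eqref{9_4}, and this is where the hypothesis $y(0)=y'(0)=y''(0)=0$ becomes indispensable; two separate things must be controlled. First, the symbol computation was carried out for the infinite sum $S_\infty[y]$, so I must show that passing to the finite sum $S_n[y]$ costs only $O(h^{4-\aaaa})$: extending $y$ by zero onto $(-\infty,0)$, the terms of $S_\infty[y]$ with index beyond $n$ involve only values $y_{n-k}$ with $n-k\le 0$, all of which vanish together with the boundary value $y_0=y(0)$, so that $S_n[y]$ and $S_\infty[y]$ differ only by a quantity absorbed into the remainder. Second, and more delicately, the inversion that turns the symbol expansion \eqref{9_3} into physical-space derivatives relies on $\mathcal{F}[D^\aaaa y](w)=(-iw)^\aaaa\hat y(w)$; for the Caputo derivative this identity is exact only when the boundary terms distinguishing the Caputo and Riemann--Liouville derivatives vanish, which is precisely guaranteed by $y(0)=y'(0)=y''(0)=0$. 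These conditions also make the zero extension of $y$ belong to $C^2$, so that each monomial $(-iw)^j\hat y(w)$ with $j\le 3$ inverts to $y^{(j)}_n$ and the $(-iw)^\aaaa$ and $(-iw)^{2+\aaaa}$ terms invert to $y_n^{(\aaaa)}$ and $\tfrac{d^2}{dx^2}y_n^{(\aaaa)}$.

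I expect the main obstacle to be exactly this inversion and remainder estimate: converting the formal power-of-$h$ symbol expansion \eqref{9_3} into a rigorous physical-space statement with an honest $O(h^{4-\aaaa})$ error. The algebra of the first two paragraphs is essentially bookkeeping, whereas the analytic weight sits in justifying term-by-term inverse Fourier transformation under the smoothness and decay that the vanishing initial conditions supply, and in verifying that the truncated tail of the symbol expansion genuinely transforms back to a term of the claimed order.
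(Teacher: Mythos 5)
Your proposal follows essentially the same route as the paper: the lemma is obtained by identifying $\sum_{k=0}^{n}\oooo_k^{(\aaaa)}y_{n-k}$ with $2^{\aaaa}S_n[y]$ and dividing the Fourier-derived expansion \eqref{9_4} by $\GGGG(1-\aaaa)$, and your coefficient simplifications all check out. You actually spell out more than the paper does --- the truncation from $S_\infty[y]$ to $S_n[y]$ and the role of $y(0)=y'(0)=y''(0)=0$ in making the Caputo and Riemann--Liouville symbols coincide --- which the paper leaves implicit in the phrase ``the properties of the inverse Fourier transform.''
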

\noindent
 Approximation \eqref{10_2} has an accuracy $O(h^{1-\aaaa})$ and satisfies $\mathcal{A}^{(\aaaa)}_n[1]=D^\aaaa 1=\sum_{k=0}^n \oooo_k^{(\aaaa)}=0$.
The numerical results for the error and the order of numerical solution NS1\eqref{10_2} of two-term equation \eqref{Equation1} and $\aaaa=0.25$, equation \eqref{Equation2} and $\aaaa=0.5$ and equation \eqref{Equation3} with $\aaaa=0.75$ are presented in Table 2.
	
\section{Higher order approximations of the Caputo derivative}
Approximation \eqref{10_2} of the Caputo derivative has an order $1-\aaaa$ and an expansion formula \eqref{10_1} of order $4-\aaaa$. In this section section we use \eqref{10_1} to obtain approximations  of order $2-\aaaa$ and two. 
\subsection{ Approximation for the Caputo derivative of order
2-\textalpha}

By approximating $y'_n$ in \eqref{10_1} with first order backward difference
$$y'_n=\dddd{y_n-y_{n-1}}{h}-\dddd{h}{2}y''_n+O(h^2)$$
 we obtain the approximation for the Caputo derivative
\begin{align}\label{12_1}
\mathcal{A}_n^{(\aaaa)}[y(t)]=\dddd{1}{\GGGG(1-\aaaa)h^\aaaa}\sum_{k=0}^{n}\ssss_k^{(\aaaa)} y_{n-k}= y_n^{(\aaaa)}+O\llll(h^{2-\aaaa}\rrrr),
\end{align}
where  $\oooo_n^{(\aaaa)}=-2^\aaaa/(2n-1)^\aaaa$ and
\begin{equation}\label{weights1}
{\ssss}_0^{(\aaaa)}=\oooo_0^{(\aaaa)}-\llll(2^\aaaa-1\rrrr)\zzzz(\aaaa),\quad {\ssss}_1^{(\aaaa)}=\oooo_1^{(\aaaa)}+\llll(2^\aaaa-1\rrrr)\zzzz(\aaaa)
\end{equation}
\begin{equation}\label{weights2}
\oooo_k^{(\aaaa)}=2^\aaaa\llll(\dddd{1}{(2k+1)^\aaaa}-\dddd{1}{(2k-1)^\aaaa}\rrrr),\quad (1<k<n).
\end{equation}
 Approximation \eqref{12_1} has an order ${2-\aaaa}$ when the function $y\in C^2[0,t_n]$ and   $y(0)=y'(0)=0$,  
and satisfies $\mathcal{A}_n^{({\aaaa})}[1]=D^\aaaa 1=\sum_{k=0}^{n}{\ssss}_k^{(\aaaa)}=0$. Now we apply a modification of the last two weights of approximation \eqref{12_1} in order to extend it to all functions of the class $C^2[0,t_n]$. Denote
$$\mathcal{W}_n =\left(\zzzz(\aaaa)-\frac{n^{1-\aaaa}}{1-\aaaa}+2^\aaaa\llll(\sum _{k=1}^{n} \dddd{1}{(2k-1)^\aaaa}-\zzzz (\aaaa)\rrrr)\right).$$ 
\begin{clm} Let $y(t)=t$. Then
\begin{equation}\label{Claim2}
\mathcal{A}_n^{(\aaaa)}[y(t)]-y^{(\aaaa)}(t)=\dddd{h^{1-\aaaa}W_n}{\GGGG(1-\aaaa)}.
\end{equation}
\end{clm}
\begin{proof}
$$\GGGG(1-\aaaa)h^\aaaa\mathcal{A}_n^{(\aaaa)}[t]=\sum_{k=0}^{n}{\ssss}_k^{(\aaaa)}(t-k h) =-h\sum_{k=1}^{n}k{\ssss}_k^{(\aaaa)},$$
$$\dddd{\GGGG(1-\aaaa)}{h^{1-\aaaa}}\mathcal{A}_n^{(\aaaa)}[t]=\llll(1-2^\aaaa\rrrr)\zzzz(\aaaa)+2^\aaaa\sum_{k=1}^{n-1}\llll( \dddd{k}{(2k-1)^\aaaa}-  \dddd{k}{(2k+1)^\aaaa} \rrrr) +\dddd{2^\aaaa n}{(2 n-1)^\aaaa},$$
\begin{align*}
\mathcal{A}_n^{(\aaaa)}[t]=\dddd{h^{1-\aaaa}}{\GGGG(1-\aaaa)}\llll(\llll(1-2^\aaaa\rrrr)\zzzz(\aaaa)+2^\aaaa\sum_{k=1}^{n}\dddd{1}{(2 k-1)^\aaaa}\rrrr).
\end{align*}
The Caputo derivative of the function $y(t)=t$ satisfies
\begin{align*}
y^{(\aaaa)}(t)=\dddd{t^{1-\aaaa}}{\GGGG(2-\aaaa)}=\dddd{n^{1-\aaaa}h^{1-\aaaa}}{(1-\aaaa)\GGGG(1-\aaaa)}.
\end{align*}
\end{proof}
The goal of the modification procedure  is to ensure that  the modified approximation satisfies   
\begin{align}\label{P12}
\mathcal{A}_n^{(\aaaa)}[1]=0,\qquad \mathcal{A}_n^{(\aaaa)}[t]=t^{1-\aaaa}/\GGGG(2-\aaaa)
\end{align}
  Properties $(ii)$ in \eqref{2_2} of the L1 approximation are equivalent to \eqref{P12}. An approximation of the Caputo derivative which satisfies \eqref{P12} has an order $2-\aaaa$ for small $n$. Let
\begin{align*}
y(t)=y(t)-y(0)-y'(0)t+y(0)+y'(0)t=z(t)+y(0)+y'(0)t.
\end{align*}
The function $z(t)=y(t)-y(0)-y'(0)t$ satisfies $z(0)=z'(0)=0$ and
$$\mathcal{A}_n^{(\aaaa)}[z(t)]=z^{(\aaaa)}(t)+O\llll( h^{2-\aaaa} \rrrr)=y^{(\aaaa)}(t)-y'(0)\dddd{t^{1-\aaaa}}{\GGGG(2-\aaaa)}+O\llll( h^{2-\aaaa} \rrrr).$$
Hence
$$\mathcal{A}_n^{(\aaaa)}[y(t)]=\mathcal{A}_n^{(\aaaa)}[z(t)]+\mathcal{A}_n^{(\aaaa)}[y(0)+y'(0)t],$$
$$\mathcal{A}_n^{(\aaaa)}[y(t)]=y^{(\aaaa)}(t)-y'(0)\dddd{t^{1-\aaaa}}{\GGGG(2-\aaaa)}+y'(0)\mathcal{A}_n^{(\aaaa)}[t]+O\llll( h^{2-\aaaa} \rrrr).$$
From Claim 3
\begin{align*}
\dddd{1}{\GGGG(1-\aaaa)h^\aaaa}\sum_{k=0}^{n}{\ssss}_k^{(\aaaa)} y_{n-k}=y_n^{(\aaaa)}+\dddd{y'_0 \mathcal{W}_n h^{1-\aaaa}}{\GGGG(1-\aaaa)}+O\llll( h^{2-\aaaa} \rrrr).
\end{align*}
By substituting  $h y'_0=y_1-y_{0}+O(h^2)$ we obtain 
\begin{align}\label{13_2}
\mathcal{A}_n^{(\aaaa)}[y(t)]=\dddd{1}{\GGGG(1-\aaaa)h^\aaaa}\sum_{k=0}^{n} \ssss_k^{(\aaaa)} y_{n-k}= y_n^{(\aaaa)}+O\llll( h^{2-\aaaa} \rrrr).
\end{align}
The weights $\ssss^{(\aaaa)}_k$ of approximation \eqref{13_2} are defined with \eqref{weights1} and \eqref{weights2} for $0\leq k\leq n-2$ and the last two weights $\ssss^{(\aaaa)}_{n-1}$ and $\ssss^{(\aaaa)}_n$ are modified as
$$\ssss^{(\aaaa)}_{n-1}=\dddd{2^\aaaa}{(2n-1)^\aaaa}-\dddd{2^\aaaa}{(2n-3)^\aaaa}-\mathcal{W}_n,\quad
 \ssss^{(\aaaa)}_{n}=-\dddd{2^\aaaa}{(2n-1)^\aaaa}+\mathcal{W}_n.$$
Approximation \eqref{13_2} has an order $2-\aaaa$ for all functions 
$y\in  C^2[0,t_n]$. The numerical results for the error and order of numerical solution NS1\eqref{13_2} of two-term equation \eqref{Equation1} and $\aaaa=0.25$, equation \eqref{Equation2} and $\aaaa=0.5$ and equation \eqref{Equation3} with $\aaaa=0.75$ are presented in Table 3.

	\subsection{Second order approximation of the Caputo derivative } 
	By approximating $y'_n$ and $y''_n$ in \eqref{10_1} with
	$$y'_n=\dddd{1}{h}\llll(\dddd{3}{2}y_n-2y_{n-1}+\dddd{1}{2}y_{n-2}   \rrrr)
	+O\llll(h^2\rrrr),$$
		$$y''_n=\dddd{1}{h^2}\llll(y_n-2y_{n-1}+y_{n-2}   \rrrr)
	+O\llll(h\rrrr),$$
	we obtain the approximation for the Caputo derivative
\begin{align}\label{14_1}
\dddd{1}{\GGGG(1-\aaaa)h^\aaaa}\sum_{k=0}^{n} {\dddddd}_k^{(\aaaa)} y_{n-k}= y_n^{(\aaaa)}+O\llll( h^{2} \rrrr),
\end{align}
where
${\dddddd}_0^{(\aaaa)}=\oooo_0^{(\aaaa)}+\dddd{3}{2}\llll(1-2^\aaaa\rrrr)\zzzz(\aaaa)+\llll( 2^{\aaaa-1}-1\rrrr)\zzzz(\aaaa-1),$
$${\dddddd}_1^{(\aaaa)}=\oooo_1^{(\aaaa)}-2\llll(1-2^\aaaa\rrrr)\zzzz(\aaaa)-2\llll( 2^{\aaaa-1}-1\rrrr)\zzzz(\aaaa-1),$$
$${\dddddd}_2^{(\aaaa)}=\oooo_2^{(\aaaa)}+\dddd{1}{2}\llll(1-2^\aaaa\rrrr)\zzzz(\aaaa)+\llll( 2^{\aaaa-1}-1\rrrr)\zzzz(\aaaa-1),$$
 $${\dddddd}_k^{(\aaaa)}=\oooo_k^{(\aaaa)}=2^\aaaa\llll(\dddd{1}{(2k+1)^\aaaa}-\dddd{1}{(2k-1)^\aaaa}\rrrr),\qquad 3\leq k\leq n-2.$$ 
Approximation \eqref{14_1} has an accuracy $O\llll( h^{2} \rrrr)$ when  $y(0)=y'(0)=0$ and a second order accuracy for all function $y\in C^2[0,t_n]$ when the last two weights are defined as
$$\dddddd^{(\aaaa)}_{n-1}=\dddd{2^\aaaa}{(2n-1)^\aaaa}-\dddd{2^\aaaa}{(2n-3)^\aaaa}-\mathcal{W}_n,\quad
 \dddddd^{(\aaaa)}_{n}=-\dddd{2^\aaaa}{(2n-1)^\aaaa}+\mathcal{W}_n.$$
			\begin{figure}[t]
  \centering
  \caption{{Graph of the exact solution of two-term equation \eqref{Equation3} and numerical solutions NS1\eqref{10_2}(green), NS1\eqref{13_2}(red) and NS1\eqref{14_1}(blue) for $\aaaa=0.6,h=0.1$.}} 
  \includegraphics[width=0.6\textwidth]{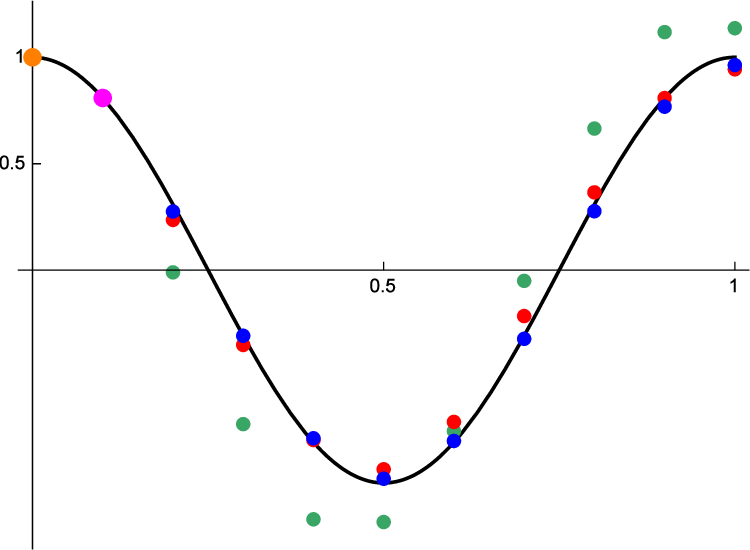}
\end{figure}\\
The order of approximation \eqref{14_1} is $2-\aaaa$ for small $n$, which is sufficient for constructing second order numerical solutions of FDEs.
 The numerical results for the error and the order of second order numerical solution NS1\eqref{14_1}  of two-term equation \eqref{Equation1} and $\aaaa=0.25$, equation \eqref{Equation2}, $\aaaa=0.5$ and equation \eqref{Equation3} with $\aaaa=0.75$ are presented in Table 4.   Second order numerical solution NS1\eqref{14_1} of two-term equation \eqref{Equation3} and $\aaaa=0.6$ is compared to numerical solution NS1\eqref{13_2} of order $1.4$ and NS1\eqref{10_2} of order $0.4$ in Figure 2.
	
\section{Numerical solution of the fractional subdiffusion equation}
The analytical and the numerical solutions of the fractional diffusion equation are studied in \cite{Alikhanov2015, Dimitrov2015,JiSun2015,JiangZhangZhangZhang2017,JinLazarovZhou2016,LiZeng2015,LinXu2007,Ma2014,Podlubny1999,RenMaoZhang2018,SunWu2006, WangVong2014,Wyss1986, YanSunZhang2017, ZengLiLiuTurner2015}. 
 In this section we use approximation \eqref{5_1} of the Caputo derivative  to construct a finite difference scheme for the fractional subdiffusion equation 
	\begin{equation}\label{16_1}
	\left\{
	\begin{array}{l l}
\dfrac{\partial^\alpha u(x,t)}{\partial t^\alpha}=D\dfrac{\partial^2 u(x,t)}{\partial x^2}+F(x,t),\quad (x,t)\in [0,1]\times[0,T],& \\
	u(x,0)=u_0(x),\;u(0,t)=u_1(t),\;u(1,t)=u_2(t),&  \\
	\end{array} 
		\right . 
	\end{equation}
where $0<\aaaa<1$. Let $h=1/N,\tttt=T/M$, where $M$ and $N$ are positive integers, and $\mathcal{J}$ be a grid on the rectangle $[0,1]\times[0,T]$:
$$\mathcal{J}=\llll\{(n h,m \tttt) \| 1\leq n\leq N, 1\leq m\leq M\rrrr\}.$$
Denote by $u_n^m=u(n h,m \tttt)$ and $F_n^m=F(n h,m\tttt)$ the values of the functions $u(x,t)$ and $F(x,t)$ on the grid $\mathcal{J}$.
By approximating the Caputo derivative in the time direction with \eqref{5_1} and  the second derivative in the space direction with a second order  central difference   we obtain
$$\dddd{1}{\GGGG(1-\aaaa)\tttt^\aaaa}\sum_{k=0}^n \ssss_k^{(\aaaa)}u_n^{m-k} = D \dddd{u_{n-1}^m-2 u_n^m+u_{n+1}^m}{h^2}+F_n^m+O\llll(\tttt^{2-\aaaa}+h^2\rrrr).$$
Let $\eta=\GGGG(1-\aaaa)D\tttt^\aaaa/h^2.$
The numerical solution $\{U_n^m\}_{n=1}^{N-1}$ of equation \eqref{16_1} on the $m$-th layer of $\mathcal{J}$ satisfies 
$$-\eta U_{n-1}^m+\llll(\ssss_0^{(\aaaa)}+2\eta\rrrr) U_n^m-\eta U_{n+1}^m=-\sum_{k=1}^n \ssss_k^{(\aaaa)}U_n^{m-k}+\tttt^\aaaa\GGGG(1-\aaaa) F_n^m.$$
	Let $\mathcal{K}$ be a tridiagonal matrix of dimension $N-1$ with values $\ssss_0^{(\aaaa)}+2\eta$ on the main diagonal, and $-\eta$ on the diagonals above and below the main diagonal.
The vector $\mathcal{U}^m=\llll(U_1^m,U_2^m,\cdots,U_{N-1}^m\rrrr)^T$ of the numerical solution on the $m$-th layer of the grid $\mathcal{J}$ is a solution of the linear system
\begin{align}\label{16_2}
	\mathcal{K} \mathcal{U}^m=\mathcal{R}_1+\eta \mathcal{R}_2,
	\end{align}
	where $\mathcal{R}_1$ and $\mathcal{R}_2$ are the column vectors of dimension $N-1$
	$$\mathcal{R}_1^T=\llll[-\sum_{k=1}^n \ssss_k^{(\aaaa)}U_n^{m-k}+\GGGG(1-\aaaa)\tttt^\aaaa F(n h,m\tttt) \rrrr]_{n=1}^{N-1},$$
	$$\mathcal{R}_2^T=\llll[u_1(m\tttt),0,\cdots,0,u_2(m\tttt)\rrrr]^T.$$
The second order numerical solution of the fractional subdiffusion equation on the first layer of the grid $\mathcal{J}$ is computed with the approximation \cite{Dimitrov2016}
		\begin{equation}\label{17_05}
y^{(\aaaa)}(\tttt)=\dddd{y(\tttt)-y(0)}{\tttt^\aaaa\GGGG(2-\aaaa)}+O\llll(\tttt^{2-\aaaa}\rrrr).
		\end{equation}
Let $\tilde{\eta}=\GGGG(2-\aaaa)D\tttt^\aaaa/h^2$. The numerical solution on the first layer of the grid $\mathcal{J}$ satisfies the system of equations
	\begin{equation*}
	\left\{
	\begin{array}{l l}
U_0^1=u_1(\tttt),U_N^1=u_2(\tttt),\quad(n=1\cdots,N-1),& \\
-\tilde{\eta} U_{n-1}^1+(1+2\tilde{\eta}) U_n^1-\tilde{\eta} U_{n+1}^1=U_n^0+\GGGG(2-\aaaa)\tttt^\aaaa F_n^1.&  \\
	\end{array} 
		\right . 
	\end{equation*}
	The fractional subdiffusion equations
		\begin{equation}\label{17_1}
	\left\{
	\begin{array}{l l}
\dfrac{\partial^\alpha v(x,t)}{\partial t^\alpha}=\dfrac{\partial^2 v(x,t)}{\partial x^2}+e^x\llll(t^{1-\aaaa}E_{1,2-\aaaa}(t)-e^t  \rrrr),& \\
	u(x,0)=e^x,\;u(0,t)=e^t, u(1,t)=e^{t+1},\quad (x,t)\in[0,1]\times[0,1],&  \\
	\end{array} 
		\right . 
	\end{equation}
and 
		\begin{equation}\label{17_2}
	\left\{
	\begin{array}{l l}
\dfrac{\partial^\alpha u(x,t)}{\partial t^\alpha}=\dddd{1}{\pi^2}\dfrac{\partial^2 u(x,t)}{\partial x^2},\quad (x,t)\in[0,1]\times[0,1],& \\
	u(x,0)=\sin (\pi x),\;u(0,t)=0, u(1,t)=0,&  \\
	\end{array} 
		\right . 
	\end{equation}
	have the solutions $u(x,t)=e^{x+t}$ and $u(x,t)=\sin (\pi x) E_{\aaaa}\llll(-t^\aaaa\rrrr)$. The numerical results for the error and order of numerical solution \eqref{16_2} of the fractional subdiffusion equations \eqref{17_1} and \eqref{17_2} for $\aaaa=0.25,\aaaa=0.5,\aaaa=0.75$ are presented in Table 5 and Table 6.
	The solution of equation \eqref{17_2} has a singularity at $t=0$,  where its  first partial derivatives are unbounded. 
Numerical solution \eqref{16_2} of equation \eqref{17_2} has a first order accuracy (Table 6). A numerical analysis of the finite difference scheme which uses the L1 approximation for the Caputo derivative is given by Jin, Lazarov and Zhou in \cite{JinLazarovZhou2016}. The Miller-Ross sequential derivative for the Caputo fractional derivative of order $n\aaaa$ is defined as 
	$$y^{[\aaaa]}(t)=D^\aaaa y(t)=y^{(\aaaa)}(t),\quad y^{[n\aaaa]}(t)=D^\aaaa y^{[(n-1)\aaaa]}(t).
	$$
	When the function $y(t)$ is differentiable in the sense of the definition of a Miller-Ross derivative its fractional Taylor polynomials (polyfractonomials) at the initial point of fractional differentiation $t=0$ are defined as:
	$$T_m^{(\aaaa)}(t)= \sum_{k=0}^m\dddd{y^{[k\aaaa]}(0) t^{\aaaa k}}{\GGGG(\aaaa k+1)}.$$
Now we compute the Taylor polyfractonomials of the solution  of the fractional subdiffusion equation and we apply the method from \cite{DimitrovDimovTodorov2018} for transforming equation \eqref{17_2} into a fractional diffusion equation whose solution belongs to the class $C^{2,2}([0,1]\times[0,1])$. Denote by $D_t^{[\bbbb]} u(x,t)$  the  Miller-Ross derivative of order $\bbbb$ of the solution $u(x,t)$ in the time direction. From equation \eqref{17_2}
	$$D_t^{[\aaaa]} u(x,t)=\dddd{1}{\pi^2}\dfrac{\partial^2 u(x,t)}{\partial x^2}.$$
	By applying  fractional differentiation of order $\aaaa$ we obtain
	$$D_t^{[2\aaaa]}u(x,t)=D_t^{\aaaa}D_t^{\aaaa}u(x,t)=\dddd{1}{\pi^2}
	D_t^\aaaa \dfrac{\partial^2 u(x,t)}{\partial x^2} =\dddd{1}{\pi^4}\dfrac{\partial^4 u(x,t)}{\partial x^4},$$
	$$D_t^{[3\aaaa]}u(x,t)=D_t^{\aaaa}D_t^{[2\aaaa]}u(x,t)=\dddd{1}{\pi^4}D_t^\aaaa\dfrac{\partial^4 u(x,t)}{\partial x^4}=\dddd{1}{\pi^6}\dfrac{\partial^6 u(x,t)}{\partial x^6}.$$
By induction we obtain
	$$D_t^{[n\aaaa]}u(x,t)=\dddd{1}{\pi^{2n}}\dfrac{\partial^{2n} u(x,t)}{\partial x^{2n}}.$$
Set $t=0$
		$$D_t^{[n\aaaa]}u(x,0)=\dddd{1}{\pi^{2n}}\dfrac{\partial^{2n} u(x,0)}{\partial x^{2n}}=\dddd{1}{\pi^{2n}}
		\dfrac{\partial^{2n} \sin (\pi x)}{\partial x^{2n}}=(-1)^{n}\sin (\pi x).$$
The solution of equation \eqref{17_2} has Taylor polyfractonomials in  time 
		$$T^{(\aaaa)}_m(x,t)=  \sum_{n=0}^m \dddd{t^{n\aaaa}D_t^{[n\aaaa]}u(x,0)}{\GGGG(n\aaaa+1)}=\sin (\pi x) \sum_{n=0}^m (-1)^n \dddd{h^{n\aaaa}}{\GGGG(n\aaaa+1)}.$$
Substitute
		$$v(x,t)=u(x,t)-T_m^{(\aaaa)}(x,t)=u(x,t)-\sin (\pi x) \sum_{n=0}^m (-1)^n \dddd{t^{n\aaaa}}{\GGGG(n\aaaa+1)}.$$
The function $v(x,t)$ satisfies
\begin{align*}
		&D_t^\aaaa v(x,t)=D_t^\aaaa u(x,t)+\sin (\pi x) \sum_{n=0}^{m-1} (-1)^{n+1} \dddd{t^{n\aaaa}}{\GGGG(n\aaaa+1)},\\
		&\dfrac{\partial^{2}}{\partial x^{2}}v(x,t)=\dfrac{\partial^{2}}{\partial x^{2}}u(x,t)+\pi^2 \sin (\pi x) \sum_{n=0}^{m} (-1)^{n+1} \dddd{t^{n\aaaa}}{\GGGG(n\aaaa+1)},
		\end{align*}
and is a solution of the fractional subdiffusion equation
		\begin{equation}\label{19_1}
	\left\{
	\begin{array}{l l}
	\dfrac{\partial^{\aaaa} v(x,t)}{\partial t^{\aaaa}}=\dddd{1}{\pi^2}\dfrac{\partial^2 v(x,t)}{\partial x^2}+(-1)^{m+1}\sin (\pi x)\dddd{t^{m\aaaa}}{\GGGG(m\aaaa+1)},&  \\
	v(x,0)=v(0,t)=v(\pi,t)=0.&  \\
	\end{array}
		\right . 
	\end{equation}
When $m\aaaa > 2$	the function $v(x,t)$  has a continuous second order partial  derivative in  time   and numerical solution \eqref{16_2} of equation \eqref{19_1} has an accuracy $O\llll(\tau^{2-\aaaa}+h^2\rrrr)$. The numerical results for the error and order of numerical solution \eqref{16_2} of the fractional subdiffusion equation \eqref{19_1} for $\aaaa=0.25$ and $m=8$, $\aaaa=0.5$ and $m=4$ and $\aaaa=0.75,m=2$ are presented in Table 7.
In Theorem 4 we establish the convergence of numerical solution \eqref{16_2} of the fractional subdiffusion equation. The proof of Theorem 4 uses the property of the weights $\ssss_m^{(\aaaa)}$ of approximation \eqref{5_1} from Claim 3.
	\begin{clm}
	$$W_m=\dddd{\aaaa}{24 m^{1+\aaaa}}+O\llll( \dddd{1}{m^{2+\aaaa}}\rrrr).$$
	\end{clm}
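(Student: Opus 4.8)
The plan is to reduce everything to the Euler--Maclaurin asymptotics of the generalized harmonic sum. First I would rewrite the definition of $W_m$ in the fully expanded form
\[
W_m=(1-2^\alpha)\zeta(\alpha)-\frac{m^{1-\alpha}}{1-\alpha}+2^\alpha\sum_{k=1}^{m}\frac{1}{(2k-1)^\alpha},
\]
so that the only genuinely $m$-dependent part is the sum of inverse odd powers. The key observation is that this odd sum reduces to ordinary harmonic sums by separating even and odd indices: from $\sum_{j=1}^{2m}j^{-\alpha}=\sum_{k=1}^{m}(2k-1)^{-\alpha}+2^{-\alpha}\sum_{k=1}^{m}k^{-\alpha}$ one obtains $\sum_{k=1}^{m}(2k-1)^{-\alpha}=\sum_{j=1}^{2m}j^{-\alpha}-2^{-\alpha}\sum_{k=1}^{m}k^{-\alpha}$.

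Next I would insert the Euler--Maclaurin expansion of the partial zeta sum,
\[
\sum_{k=1}^{N}\frac{1}{k^\alpha}=\frac{N^{1-\alpha}}{1-\alpha}+\zeta(\alpha)+\frac{1}{2N^\alpha}-\frac{\alpha}{12N^{1+\alpha}}+O\!\left(\frac{1}{N^{3+\alpha}}\right),
\]
valid for $0<\alpha<1$ by analytic continuation; note that only odd-order derivatives of $x^{-\alpha}$ enter, so there is no $N^{-2-\alpha}$ term. Substituting $N=2m$ and $N=m$ and forming the combination above, the leading terms collapse to $2^{-\alpha}m^{1-\alpha}/(1-\alpha)$, the constants to $(1-2^{-\alpha})\zeta(\alpha)$, the $m^{-\alpha}$ contributions cancel exactly, and the $m^{-1-\alpha}$ terms combine to $\tfrac{\alpha}{24}2^{-\alpha}m^{-1-\alpha}$, giving
\[
\sum_{k=1}^{m}\frac{1}{(2k-1)^\alpha}=\frac{2^{-\alpha}m^{1-\alpha}}{1-\alpha}+(1-2^{-\alpha})\zeta(\alpha)+\frac{\alpha}{24}\frac{2^{-\alpha}}{m^{1+\alpha}}+O\!\left(\frac{1}{m^{3+\alpha}}\right).
\]

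Finally I would multiply by $2^\alpha$ and substitute into $W_m$. The factor $2^\alpha$ turns the three displayed terms into $m^{1-\alpha}/(1-\alpha)$, $(2^\alpha-1)\zeta(\alpha)$ and $\tfrac{\alpha}{24m^{1+\alpha}}$; the first cancels the term $-m^{1-\alpha}/(1-\alpha)$ in $W_m$ and the second cancels $(1-2^\alpha)\zeta(\alpha)$, leaving $W_m=\tfrac{\alpha}{24m^{1+\alpha}}+O(1/m^{3+\alpha})$, which is even stronger than the claimed $O(1/m^{2+\alpha})$. The hard part is purely the bookkeeping of the powers of $2$: one must check that the $m^{1-\alpha}$, $\zeta(\alpha)$ and $m^{-\alpha}$ contributions all vanish, so that the $m^{-1-\alpha}$ term emerges as the true leading order. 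This hinges on using the correct coefficient $-\alpha/12$ of the $N^{-1-\alpha}$ term in the Euler--Maclaurin expansion, since that is precisely what produces the constant $\alpha/24$.
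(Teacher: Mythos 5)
Your proposal is correct and follows essentially the same route as the paper: both split $\sum_{k=1}^{m}(2k-1)^{-\alpha}$ as $S_{2m}-2^{-\alpha}S_m$ with $S_N=\sum_{k=1}^{N}k^{-\alpha}$, insert the Euler--Maclaurin expansion $S_N=\zeta(\alpha)+\tfrac{N^{1-\alpha}}{1-\alpha}+\tfrac{1}{2N^{\alpha}}-\tfrac{\alpha}{12N^{1+\alpha}}+\cdots$, and watch all terms except the $m^{-1-\alpha}$ one cancel. Your observation that the next Euler--Maclaurin term is $O(N^{-3-\alpha})$ even gives a marginally sharper remainder than the $O(m^{-2-\alpha})$ stated in the claim.
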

	\begin{proof} Let $S_m=\sum_{k=1}^m 1/k^\aaaa$. From the formula for sum of powers
	$$S_{m-1}=\zzzz(\aaaa)+\dddd{m^{1-\aaaa}}{1-\aaaa}\sum_{k=0}^\infty \binom{1-\aaaa}{k}\dddd{B_k}{m^k}.
	$$
	The numbers $S_m$ satisfy
\begin{align*}
	S_m=\zzzz(\aaaa)+\frac{m^{1-\aaaa}}{1-\aaaa}+\dddd{1}{2 m^\aaaa}-\dddd{\aaaa}{12m^{1+\aaaa}}+O\llll( \dddd{1}{m^{2+\aaaa}}\rrrr),
	\end{align*}
	 $$S_{2m}=\sum_{k=1}^{2m} 1/k^\aaaa=\sum _{k=1}^{m} \dddd{1}{(2k-1)^\aaaa}+\sum _{k=1}^{m} \dddd{1}{(2k)^\aaaa}=\sum _{k=1}^{m} \dddd{1}{(2k-1)^\aaaa}+\dddd{S_m}{2^\aaaa},$$
	$$S_{2m}-\dddd{S_m}{2^\aaaa}=\zzzz(\aaaa)\llll( 1-\dddd{1}{2^\aaaa}\rrrr)+\dddd{m^{1-\aaaa}}{(1-\aaaa)2^\aaaa}+\dddd{\aaaa}{12 (2m)^{1+\aaaa}}+O\llll( \dddd{1}{m^{2+\aaaa}}\rrrr).$$
	Hence
	$$W_m=\zzzz(\aaaa)-\frac{m^{1-\aaaa}}{1-\aaaa}+2^\aaaa\llll(S_{2m}-\dddd{S_m}{2^\aaaa}-\zzzz (\aaaa)\rrrr)=\dddd{\aaaa}{24m^{1+\aaaa}}+O\llll( \dddd{1}{m^{2+\aaaa}}\rrrr).$$
	\end{proof}
	\noindent
 From Claim 4:
\begin{align}\label{20_2}
	\llll|\ssss_m^{(\aaaa)}\rrrr|>\dddd{2^\aaaa}{(2m-1)^\aaaa}-W_m>\dddd{1}{m^\aaaa}-\frac{\aaaa}{24 m^{1+\aaaa}}>\dddd{23}{24 m^\aaaa}>\dddd{1}{2m^\aaaa}.
	\end{align}
	 The maximum (infinity) norm  of the vector $\mathcal{V}=\llll(v_i\rrrr)$ and the square matrix  $\mathcal{L}=\llll(l_{ij}\rrrr)$ of dimension $N-1$ are defined as
$$\llll\| \mathcal{V}\rrrr\|=\max_{1\leq i\leq N-1} |v_i|,\quad \llll\| \mathcal{L} \rrrr\|=\max_{1\leq j\leq N-1} \sum_{m=1}^{N-1}|l_{ij}|.$$
The matrix $\mathcal{K}$ is a  diagonally dominant tridiagonal matrix with positive elements on the main diagonal and negative elements on the diagonals below and above the main diagonal. The matrix $\mathcal{K}^{-1}$ is a positive matrix. 
From the  Ahlberg-Nilson-Varah bound \cite{AhlbergNilson1963,Varah1975}
$$\llll\| \mathcal{K}^{-1} \rrrr\|\leq \dddd{1}{\ssss_0^{(\aaaa)}}.$$
The numbers $\ssss_k^{(\aaaa)}$ satisfy
$\sum_{k=1}^m \llll| \ssss_k^{(\aaaa)} \rrrr|=\ssss_0^{(\aaaa)}$. From \eqref{20_2}
\begin{align}\label{21_1}
\sum_{k=1}^{m-1} \llll| \ssss_k^{(\aaaa)} \rrrr|=\ssss_0^{(\aaaa)}-\llll|\ssss_m^{(\aaaa)}\rrrr|<\ssss_0^{(\aaaa)}-\dddd{1}{2m^\aaaa}.
\end{align}
Let $e_n^m=u_n^m-U_n^m$ be the error of numerical solution \eqref{16_2}. The error vector $\mathcal{E}^m=(e_n^m)$ on the $m$-th layer of the grid $\mathcal{J}$ satisfies the system of equations
$$\mathcal{K} \mathcal{E}^m=\mathcal{R}^m,$$
where $\mathcal{R}^m=(r^m_n)$ is an $N-1$ dimensional column vector with elements
$$r_n^m=-\sum_{k=1}^{m-1} \ssss_k^{(\aaaa)}e_n^{m-k}+\tttt^\aaaa\llll(A_n^m \tttt^{2-\aaaa} +B_n^m h^2\rrrr),$$
and $A_n^m \tttt^{2-\aaaa} +B_n^m h^2$ is the truncation error at the point $(n h,m\tttt)$. 
\begin{thm} The error of \eqref{16_2} on the $m$-th layer of   $\mathcal{J}$ satisfies
\begin{align}\label{21_2}
\llll\| \mathcal{E}^m\rrrr\|\leq 2A m^\aaaa \tttt^\aaaa\llll(\tttt^{2-\aaaa}+h^2\rrrr).
\end{align}
\end{thm}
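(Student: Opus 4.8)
The plan is to prove \eqref{21_2} by induction on the layer index $m$, using the invertibility and norm bound for $\mathcal{K}$ together with the sign structure and the decay estimate \eqref{20_2} of the weights. The base case $m=1$ is granted by the assumption on $A$ made just before the statement. For the inductive step I would assume \eqref{21_2} on all layers $1,\dots,m-1$ and derive it on layer $m$.

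First I would solve the error system. Since $\mathcal{K}\mathcal{E}^m=\mathcal{R}^m$ and $\mathcal{K}^{-1}$ exists, $\mathcal{E}^m=\mathcal{K}^{-1}\mathcal{R}^m$, so the Ahlberg--Nilson--Varah bound gives $\llll\|\mathcal{E}^m\rrrr\|\le \frac{1}{\ssss_0^{(\aaaa)}}\llll\|\mathcal{R}^m\rrrr\|$. Next I would estimate $\llll\|\mathcal{R}^m\rrrr\|=\max_n|r_n^m|$ term by term. Writing $P=\tttt^\aaaa(\tttt^{2-\aaaa}+h^2)$, I bound the convolution part using $|e_n^{m-k}|\le\llll\|\mathcal{E}^{m-k}\rrrr\|$, the induction hypothesis, and the monotonicity $(m-k)^\aaaa\le (m-1)^\aaaa$ valid for $1\le k\le m-1$; this yields $2A(m-1)^\aaaa P\sum_{k=1}^{m-1}\llll|\ssss_k^{(\aaaa)}\rrrr|$. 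The truncation part is bounded by $AP$ from $|A_n^m|,|B_n^m|<A$. The crucial input is \eqref{21_1}, which replaces $\sum_{k=1}^{m-1}|\ssss_k^{(\aaaa)}|$ by $\ssss_0^{(\aaaa)}-\frac{1}{2m^\aaaa}$ instead of just $\ssss_0^{(\aaaa)}$.

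Combining these estimates and dividing by $\ssss_0^{(\aaaa)}$, the inequality I must verify is $2A(m-1)^\aaaa P\bigl(1-\frac{1}{2\ssss_0^{(\aaaa)}m^\aaaa}\bigr)+\frac{AP}{\ssss_0^{(\aaaa)}}\le 2Am^\aaaa P$. After cancelling $AP$ and rearranging, both sides carry the common positive factor $m^\aaaa-(m-1)^\aaaa$, so the whole thing reduces to $\ssss_0^{(\aaaa)}m^\aaaa\ge \frac12$. This holds trivially because $\ssss_0^{(\aaaa)}=2^\aaaa-(2^\aaaa-1)\zzzz(\aaaa)>2^\aaaa>1$, since $\zzzz(\aaaa)<0$ on $(0,1)$. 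This closes the induction.

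The step I expect to be the real crux, and the reason estimate \eqref{20_2} on $|\ssss_m^{(\aaaa)}|$ is needed, is precisely this final reduction. If one used only the crude bound $\sum_{k=1}^{m-1}|\ssss_k^{(\aaaa)}|\le\ssss_0^{(\aaaa)}$, the inductive requirement would become $\frac{1}{\ssss_0^{(\aaaa)}}\le 2\bigl(m^\aaaa-(m-1)^\aaaa\bigr)\sim 2\aaaa\,m^{\aaaa-1}$, which fails for large $m$. The extra $-\frac{1}{2m^\aaaa}$ gained from \eqref{21_1} contributes a term proportional to $m^\aaaa-(m-1)^\aaaa$ on the left, so that this vanishing quantity cancels against the right-hand side and the induction propagates the clean $m^\aaaa$ growth. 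Making this cancellation explicit, rather than the elementary norm manipulations, is where the argument must be handled carefully.
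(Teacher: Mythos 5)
Your proof is correct and follows essentially the same route as the paper: induction on $m$, the Ahlberg--Nilson--Varah bound $\llll\|\mathcal{K}^{-1}\rrrr\|\leq 1/\ssss_0^{(\aaaa)}$, and the refined sum estimate \eqref{21_1} as the crucial input. The only (harmless) deviation is that you bound $(m-k)^\aaaa$ by $(m-1)^\aaaa$ where the paper uses $m^\aaaa$, so the paper gets the exact cancellation $2Am^\aaaa\cdot\frac{1}{2m^\aaaa}=A$ against the truncation term while you need the extra, trivially true, check $\ssss_0^{(\aaaa)}m^\aaaa\geq\frac{1}{2}$.
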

\begin{proof} Induction on $m$. Let $A$ be large enough, such that  $\llll|A_n^m\rrrr|<A$ and $\llll|B_n^m\rrrr|<A$, for all $m,n$ and \eqref{21_2} holds for $m=1$ and $m=2$. Suppose that \eqref{21_2} holds for all $k\leq m-1$:
$$\llll|r_n^m\rrrr|\leq \sum_{k=1}^{m-1} \llll|\ssss_k^{(\aaaa)}\rrrr|\llll|e_n^{m-k}\rrrr|+A \tttt^\aaaa\llll( \tttt^{2-\aaaa} +h^2\rrrr).$$
From the induction assumption and \eqref{21_1}
$$\llll|r_n^m\rrrr|\leq 2A m^\aaaa \tttt^\aaaa\llll(\tttt^{2-\aaaa}+h^2\rrrr)\sum_{k=1}^{m-1} \llll|\ssss_k^{(\aaaa)}\rrrr|+A \tttt^\aaaa \llll(\tttt^{2-\aaaa} +h^2\rrrr),$$
$$\llll|r_n^m\rrrr|\leq 2A m^\aaaa \tttt^\aaaa\llll(\tttt^{2-\aaaa}+h^2\rrrr)\llll(\ssss_0^{(\aaaa)}-\dddd{1}{2 m^\aaaa}\rrrr)+A \tttt^\aaaa\llll( \tttt^{2-\aaaa} +h^2\rrrr),$$
$$\llll|r_n^m\rrrr|\leq 2A\ssss_0^{(\aaaa)} m^\aaaa\tttt^\aaaa\llll(\tttt^{2-\aaaa}+h^2\rrrr),$$
for all $n=1,\cdots,N-1$. Then
$$\llll\| \mathcal{R}^m\rrrr\|\leq 2A\ssss_0^{(\aaaa)} m^\aaaa\tttt^\aaaa\llll(\tttt^{2-\aaaa}+h^2\rrrr).
$$
The error on the $m$-th layer of the grid $\mathcal{J}$ satisfies $\mathcal{E}^m=\mathcal{K}^{-1}\mathcal{R}^m$ and
$$\llll\| \mathcal{E}^m\rrrr\|\leq 
\llll\| \mathcal{K}^{-1}\rrrr\|\llll\| \mathcal{R}^m\rrrr\|\leq \dddd{1}{\ssss_0^{(\aaaa)}}\llll\| \mathcal{R}^m\rrrr\|\leq 2A m^\aaaa\tttt^\aaaa\llll(\tttt^{2-\aaaa}+h^2\rrrr).
$$
\end{proof}
From \eqref{21_2}  the error of numerical solution \eqref{16_2} of the fractional subdiffusion equation on the grid $\mathcal{J}$ satisfies
$$\llll\| \mathcal{E}^m\rrrr\|\leq  2A M^\aaaa\tttt^\aaaa\llll(\tttt^{2-\aaaa}+h^2\rrrr)\leq  2A T^\aaaa\llll(\tttt^{2-\aaaa}+h^2\rrrr),
$$
for all $m=1,2,\cdots,M$.
\section{Approximations for the beta function and the Caputo derivative of the power function}
In  this section we obtain the  expansion formula of the Riemann sum approximation of the beta function. The expansion formula is used  to find the first term of the left endpoint expansions of approximations \eqref{2_1}, \eqref{4_2}, \eqref{4_3} and \eqref{5_1} of the Caputo derivative of the power function.
 The Riemann sum approximation of the fractional integral has an asymptotic expansion formula
\begin{align} \label{AFI}
 h^{\aaaa}&\sum_{k=1}^{n-1} \dddd{y(t-k h)}{k^{1-\aaaa}}=\int_0^t \dddd{y(x)}{(t-x)^{1-\aaaa}}dx +\sum_{k=0}^\infty (-1)^k\dddd{\zzzz(1-\aaaa-k)}{k!}y^{(k)}(t) h^{k+\aaaa}-\nonumber\\
&\GGGG(\aaaa)\sum_{k=0}^\infty \dddd{B_{k+1}}{(k+1)!}\llll( \sum_{m=0}^k (-1)^m\binom{k}{m}\dddd{x^{\aaaa-m-1}}{\GGGG(\aaaa-m)}y^{(m-k)}(0) \rrrr)h^{k+1}.
\end{align}
The right endpoint expansion formula is obtained from the expansion formula \eqref{4_1} of the polylogarithm function $Li_{1-\aaaa}\llll(e^{i w h}\rrrr)$. In the special case $\aaaa=1$ asymptotic expansion formula \eqref{AFI} is the Euler-Mclaurin formula. 
\subsection{Riemann sum approximation of the beta function}  The beta function is defined  as 
\begin{align}\label{BetaF}
B(\aaaa,\bbbb)=\dddd{\GGGG(\aaaa)\GGGG(\bbbb)}{\GGGG(\aaaa+\bbbb)}=\int_0^1 (1-x)^{\aaaa-1} x^{\bbbb-1} dx.
\end{align}
From \eqref{BetaF} with $\aaaa:=\aaaa+1,\bbbb:=\bbbb+1$ and $x:= t x$ we obtain
\begin{align}\label{BetaI}
\int_0^t x^\bbbb (t-x)^\aaaa dx=B(\aaaa+1,\bbbb+1)t^{\aaaa+\bbbb+1},
\end{align}
where $\aaaa>-1,\bbbb>-1$. Let $y(x)=x^\bbbb, z(x)=(t-x)^\aaaa$.
The derivatives of the functions $y(x)$ and $z(x)$ satisfy
$$\dddd{y^{(k)}(t)}{k!}=\binom{\bbbb}{k}t^{\bbbb-k}, \quad \dddd{z^{(k)}(0)}{k!}=(-1)^k\binom{\aaaa}{k}t^{\aaaa-k}.$$
The Riemann sum approximation of \eqref{BetaI}  satisfies
$$h\sum_{k=1}^{n-1} (kh)^\bbbb(t-kh)^\aaaa=\int_0^t x^\bbbb (t-x)^\aaaa d x+L_0(h)+R_t(h),$$
$$h^{\aaaa+\bbbb+1}\sum_{k=1}^{n-1} k^\bbbb(n-k)^\aaaa=B(\aaaa+1,\bbbb+1)t^{\aaaa+\bbbb+1}+L_0(h)+R_t(h).$$
 From \eqref{AFI} with $\bbbb :=\bbbb+1$, the left endpoint expansion $L_0(h)$ satisfies
\begin{align*}
L_0(h)=&\sum_{k=0}^\infty (-1)^k\dddd{\zzzz(-\bbbb-k)}{k!}z^{(k)}(0)h^{k+\bbbb+1}=\\
&\sum_{k=0}^\infty (-1)^k\binom{\aaaa}{k}\zzzz(-\bbbb-k)t^{\aaaa-k}h^{k+\bbbb+1}.
\end{align*}
 From \eqref{AFI} with $\aaaa :=\aaaa+1$, the right endpoint expansion $R_t(h)$ satisfies
\begin{align*}
R_t(h)=&\sum_{k=0}^\infty (-1)^k\dddd{\zzzz(-\aaaa-k)}{k!}y^{(k)}(t)h^{k+\aaaa+1}=\\
&\sum_{k=0}^\infty (-1)^k\binom{\bbbb}{k}\zzzz(-\aaaa-k)t^{\bbbb-k}h^{k+\aaaa+1}.
\end{align*}
Hence
\begin{align}\label{AA1}
h^{\aaaa+\bbbb+1}\sum_{k=1}^{n-1} k^\bbbb(n-k)^\aaaa=&\dddd{\GGGG(\aaaa+1)\GGGG(\bbbb+1)}{\GGGG(\aaaa+\bbbb+2)}t^{\aaaa+\bbbb+1}+\\
&\sum_{k=0}^\infty (-1)^k\binom{\bbbb}{k}\zzzz(-\aaaa-k)t^{\bbbb-k}h^{k+\aaaa+1}+\nonumber\\
&\sum_{k=0}^\infty (-1)^k\binom{\aaaa}{k}\zzzz(-\bbbb-k)t^{\aaaa-k}h^{k+\bbbb+1}.\nonumber
\end{align}
From \eqref{AA1} with $t=t_n=nh$ we obtain the asymptotic expansion formula 
\begin{align}\label{AEFSP}
\sum_{k=1}^{n-1} k^\bbbb (n-k)^\aaaa&=\dddd{\GGGG(\aaaa+1)\GGGG(\bbbb+1)}{\GGGG(\aaaa+\bbbb+2)}n^{\aaaa+\bbbb+1}+\sum_{k=0}^{M-1} (-1)^k\binom{\bbbb}{k}\zzzz(-\aaaa-k)n^{\bbbb-k}+\nonumber\\
\sum_{k=0}^{N-1}& (-1)^k\binom{\aaaa}{k}\zzzz(-\bbbb-k)n^{\aaaa-k}+O\llll( n^{\min\{\bbbb-M,\aaaa-N\} } \rrrr).
\end{align}
The values of the zeta function at the negative integers satisfy
$$\zzzz(-n)=(-1)^n\dddd{B_{n+1}}{n+1}.$$
From \eqref{AEFSP} with $\bbbb=0,\aaaa:=-\aaaa$ we obtain the formula for sum of powers 
\begin{align*}
\sum_{k=1}^{n-1} \dddd{1}{k^\aaaa}=\zzzz(\aaaa)+\dddd{n^{1-\aaaa}}{1-\aaaa}+\sum_{k=0}^{N-1} (-1)^k\binom{-\aaaa}{k}\zzzz(-k)n^{-\aaaa-k}+O\llll( n^{-\aaaa-N} \rrrr),
\end{align*}
\begin{align*}
\sum_{k=1}^{n-1} \dddd{1}{k^\aaaa}=\zzzz(\aaaa)+\dddd{n^{1-\aaaa}}{1-\aaaa}+\sum_{k=0}^{N-1} \binom{-\aaaa}{k}\dddd{B_{k+1}}{k+1}n^{-\aaaa-k}+O\llll(\dddd{1}{ n^{\aaaa+N}} \rrrr),
\end{align*}
\begin{align*}
\sum_{k=1}^{n-1} \dddd{1}{k^\aaaa}=\zzzz(\aaaa)+\dddd{n^{1-\aaaa}}{1-\aaaa}\sum_{m=0}^{N} \binom{1-\aaaa}{m}\dddd{B_{m}}{n^m}+O\llll(\dddd{1}{ n^{\aaaa+N}} \rrrr).
\end{align*}
%%%%%%%%%%%%%%%%%%%%%%%%%%%%%%%%%%%%%%%%
\subsection{Approximations for Caputo derivative of the power function} The power function $y(t)=t^\bbbb$ has a Caputo derivative
\begin{align}\label{CDPF}
D^\aaaa t^\bbbb=\dddd{\bbbb}{\GGGG (1-\aaaa)}\int_0^t \dddd{x^{\bbbb-1}}{(t-x)^\aaaa}d x=\dddd{\GGGG(1+\bbbb)}{\GGGG(1+\bbbb-\aaaa)}t^{\bbbb-\aaaa},
\end{align}
 where $\bbbb>0$. In this section we show that the first term of the left endpoint expansions of approximations \eqref{2_1}, \eqref{4_2}, \eqref{4_3}, \eqref{5_1}  of the Caputo derivative of the power function  is $Ch^{1+\bbbb}$, where $C=\zzzz(-\bbbb) t^{-1-\aaaa}/\GGGG(-\aaaa)$.
\begin{clm} Let $y(t)=t^\bbbb$. Then
\begin{align}\label{C5}
\dddd{1}{\GGGG(-\aaaa)h^\aaaa}\llll(\sum_{k=1}^{n-1}\dddd{y_{n-k}}{k^{1+\aaaa}}-\zzzz(1+\aaaa)y_n+\zzzz(\aaaa)y'_n h\rrrr)&=\\
y_n^{(\aaaa)}+\dddd{\zzzz(-\bbbb)}{\GGGG(-\aaaa)t^{1+\aaaa}}&h^{1+\bbbb}+O\llll(h^{2-\aaaa}\rrrr).\nonumber
\end{align}
\end{clm}
\begin{proof}
From \eqref{AA1} with $\aaaa:=-1-\aaaa$ we obtain
\begin{align}\label{EBF1}
h^{\bbbb-\aaaa}\sum_{k=1}^{n-1} k^{-1-\aaaa}(n-k)^\bbbb&=\dddd{\GGGG(-\aaaa)\GGGG(1+\bbbb)}{\GGGG(1+\bbbb-\aaaa)}t^{\bbbb-\aaaa}+\\
&\sum_{k=0}^\infty (-1)^k\binom{\bbbb}{k}\zzzz(1+\aaaa-k)t^{\bbbb-k}h^{k-\aaaa}+\nonumber\\
&\sum_{k=0}^\infty (-1)^k\binom{-1-\aaaa}{k}\zzzz(-\bbbb-k)t^{-1-\aaaa-k}h^{k+\bbbb+1}.\nonumber
\end{align}
From \eqref{EBF1} the Riemann sum approximation of  the singular integral in \eqref{CDPF} has an expansion formula of order $2-\aaaa$
\begin{align*}
h^{\bbbb-\aaaa}\sum_{k=1}^{n-1} k^{-1-\aaaa}(n-k)^\bbbb=\GGGG(-\aaaa)D^\aaaa t^\bbbb&+\zzzz(1+\aaaa)t^\bbbb\dddd{1}{h^\aaaa}-\bbbb\zzzz(\aaaa)t^{\bbbb-1}h^{1-\aaaa}+\\
&\dddd{\zzzz(-\bbbb)}{t^{1+\aaaa}}h^{1+\bbbb}+O\llll(h^{2-\aaaa}\rrrr).
\end{align*}
Set $t=t_n=nh$:
\begin{align*}
\dddd{1}{h^{\aaaa}}\sum_{k=1}^{n-1} \dddd{y_{n-k}}{k^{1+\aaaa}}=\GGGG(-\aaaa) y_n^{(\aaaa)}+&\dddd{1}{h^\aaaa}\zzzz(1+\aaaa)y_n-\zzzz(\aaaa)h^{1-\aaaa}y'_n+\\
&\dddd{\zzzz(-\bbbb)}{t_n^{1+\aaaa}}h^{1+\bbbb}+O\llll(h^{2-\aaaa}\rrrr).
\end{align*}
\end{proof}
From \eqref{C5} and the substitution $h y'_n=y_n-y_{n-1}+O(h^2)$ we obtain the expansion formula of order $2-\aaaa$ of approximation \eqref{4_3} of the power function. The order of approximation\eqref{4_3} of $t^\bbbb$ is $\min\{1+\bbbb,2-\aaaa\}$, when $0<\aaaa,\bbbb<1$. In Claim 6, Clam 7 and Claim 8 we show that the left endpoint expansion formulas of  approximations \eqref{2_1}, \eqref{4_2} and \eqref{5_1} of the Caputo derivative of the power function have the same first term of order $1+\bbbb$.
%%%%%%%%%%%%%%%%%%  L1 %%%%%%%%%%%%%%%%%%%
\begin{clm} Let $y(t)=t^\bbbb$. Then
\begin{align}\label{C6}
&\dddd{1}{\GGGG(2-\aaaa)h^\aaaa}\llll(\sum_{k=1}^{n-1}\llll((k+1)^{1-\aaaa}-2k^{1-\aaaa}+(k-1)^{1-\aaaa} \rrrr)y_{n-k}+y_n\rrrr)=\nonumber\\
&\qquad\qquad\qquad\qquad\qquad\qquad y_n^{(\aaaa)}+\dddd{\zzzz(-\bbbb)}{\GGGG(-\aaaa)t^{1+\aaaa}}h^{1+\bbbb}+O\llll(h^{2-\aaaa}\rrrr).
\end{align}
\end{clm}
\begin{proof} Let 
$z(x)=(t-x+h)^{1-\aaaa}-2(t-x)^{1-\aaaa}+(t-x-h)^{1-\aaaa}.$ The L1 approximation of the power function is expressed with $z(x)$ as
\begin{align}\label{Cz6}
&\dddd{1}{\GGGG(2-\aaaa)h^\aaaa}\llll(\sum_{k=1}^{n-1}\llll((k+1)^{1-\aaaa}-2k^{1-\aaaa}+(k-1)^{1-\aaaa} \rrrr)y_{n-k}+y_n\rrrr)=\nonumber\\
&\quad\dddd{1}{\GGGG(2-\aaaa)h}\llll(\sum_{k=1}^{n-1}z_{n-k}y_k +y_n\rrrr)=\dddd{h^{\bbbb-1}}{\GGGG(2-\aaaa)}\llll(\sum_{k=1}^{n-1} z_{n-k}k^\bbbb+y_n\rrrr)
\end{align}
From \eqref{AFI} with $\aaaa=1+\bbbb$ the left endpoint expansion of \eqref{Cz6} satisfies
$$L_0(h)=\dddd{h^{\bbbb-1}}{\GGGG(2-\aaaa)}\sum_{k=0}^\infty \dddd{\zzzz(-\bbbb-k)}{k!}z^{(k)}(0)h^{k}=\dddd{\zzzz(-\bbbb)z(0)}{\GGGG(2-\aaaa)}h^{\bbbb-1}+O\llll(h^{2+\bbbb}\rrrr).
$$
 From second order central difference approximation
$$\dddd{z(0)}{h^2}=\dddd{(t+h)^{1-\aaaa}-2t^{1-\aaaa}+(t-h)^{1-\aaaa}}{h^2}=\dddd{d^2\llll(t^{1-\aaaa}\rrrr)}{dt^2}+O\llll(h^2\rrrr),$$
$$\dddd{z(0)}{\GGGG(2-\aaaa)h^2}=\dddd{(1-\aaaa)(-\aaaa)t^{-1-\aaaa}}{\GGGG(2-\aaaa)}+O\llll(h^2\rrrr)=\dddd{1}{\GGGG(-\aaaa)t^{1+\aaaa}}+O\llll(h^2\rrrr).$$
Therefore the first term of the left endpoint expansion of the L1 approximation of the power function is $\zzzz(-\bbbb) t^{-1-\aaaa}h^{1+\bbbb}/\GGGG(-\aaaa)$.
\end{proof}
From \eqref{C6} with $\bbbb=\aaaa$ the L1 approximation of the Caputo derivative of the power function $t^\aaaa$ has an order $1+\aaaa$ when $0<\aaaa\leq 0.5$, and an order $2-\aaaa$ when $0.5<\aaaa<1$. The  L1 approximation of the Caputo derivative of $t^\aaaa$ has an accuracy $O\llll(h^\aaaa\rrrr)$ when $n$ is small. Approximations \eqref{4_2}, \eqref{4_3} and \eqref{5_1}  of the Caputo derivative of $t^\aaaa$ have the same order.
%%%%%%%%%%%%% 8 %%%%%%%%%%%%%%%
\begin{clm} Let $y(t)=t^\bbbb$. Then
\begin{align*}
\dddd{1}{2\GGGG(1-\aaaa)h^\aaaa}&\llll(\sum_{k=1}^{n-1}\llll(\dddd{1}{(k+1)^{\aaaa}}-\dddd{1}{(k-1)^{\aaaa}} \rrrr)y_{n-k}+y_n\rrrr)=\\
&y_n^{(\aaaa)}+\dddd{1}{\GGGG(1-\aaaa)}\zzzz(\aaaa)y'_n h^{1-\aaaa}+\dddd{\zzzz(-\bbbb)}{\GGGG(-\aaaa)t^{1+\aaaa}}h^{1+\bbbb}+O\llll(h^{2-\aaaa}\rrrr).
\end{align*}
\end{clm}
\begin{clm} Let $y(t)=t^\bbbb$. Then
\begin{align*}
\dddd{2^\aaaa}{\GGGG(1-\aaaa)h^\aaaa}&\llll(\sum_{k=1}^{n-1}\llll(\dddd{1}{(2k+1)^{\aaaa}}-\dddd{1}{(2k-1)^{\aaaa}} \rrrr)y_{n-k}+y_n\rrrr)=\\
&y_n^{(\aaaa)}+\dddd{2^\aaaa-1}{\GGGG(1-\aaaa)}\zzzz(\aaaa)y'_n h^{1-\aaaa}+\dddd{\zzzz(-\bbbb)}{\GGGG(-\aaaa)t^{1+\aaaa}}h^{1+\bbbb}+O\llll(h^{2-\aaaa}\rrrr).
\end{align*}
\end{clm}
\noindent
The proofs of Claim 8 and Claim 9 are similar to the proof of Claim 7. In  section 6 we show that the first term of the left endpoint expansion of the Gr\"unwald formula approximation of the Caputo derivative ot the power function $t^\bbbb$ is also $\zzzz(-\bbbb) t^{-1-\aaaa}h^{1+\bbbb}/\GGGG(-\aaaa)$.
%%%%%%%%%%%%%%%%%%%%%%%%%%%%%%%%%
\section{Induced shifted approximations of the Caputo derivative}
  In this section we use the method from \cite{DimitrovMiryanovTodorov2018} to obtain the second order induced shifted approximation of the Caputo derivative of the Gr\"unwald formula and the induced shifted  approximations   of \eqref{2_1}, \eqref{4_2}, \eqref{4_3}, \eqref{5_1} of order $2-\aaaa$. At the optimal shift values the approximations have a third order  and a second order accuracy respectively. The construction of the induced shifted approximations is based on Lemma 9 and Lemma 10. Denote by
$\mathcal{G}_n^{(\aaaa)}[y(t)]$ the Gr\"unwald formula approximation
	\begin{equation}\label{Grunwald1}
	\mathcal{G}_n^{(\aaaa)}[y(t)]=\sum_{k=0}^n (-1)^k \binom{\aaaa}{k}=y_{n-\aaaa/2}^{(\aaaa)}+O(h^2).
	\end{equation}
	\begin{lem} Suppose that $y(0)=y'(0)=0$. Then
$$\mathcal{G}_n^{(\aaaa)}[y(t)]-(s-\aaaa/2)h\mathcal{G}_n^{(1+\aaaa)}[y(t)]=y_{n-s}^{(\aaaa)}+O(h^2).$$
\end{lem}
\begin{proof} The Gr\"unwald formula approximation of the Caputo derivative has a first order accuracy
$$\mathcal{G}_n^{(1+\aaaa)}y(t)=\dddd{1}{h^{1+\aaaa}}\sum_{k=0}^{n}\binom{1+\aaaa}{k}y_{n-k}=y_n^{(1+\aaaa)}+O(h).$$
The function $y(t)$ satisfies $y(0)=y'(0)=0$ and
\begin{align*}
(s-\aaaa/2)h\mathcal{G}_n^{(1+\aaaa)}[y(t)]=(s-\aaaa/2)h \dddd{d}{d t}y_n^{(\aaaa)}+O(h^2),
\end{align*}
$$\mathcal{G}_n^{(\aaaa)}[y(t)]-(s-\aaaa/2)h\mathcal{G}_n^{(1+\aaaa)}[y(t)]=y_{n}^{(\aaaa)}-s h \dddd{d}{d t}y_n^{(\aaaa)}+O(h^2)=y_{n-s}^{(\aaaa)}+O(h^2).$$
%$$\mathcal{G}_n^{(\aaaa)}y(x)-(s-\aaaa/2)h\mathcal{G}_n^{(1+\aaaa)}y(x)=y_{n-s}^{(\aaaa)}+O(h^2)$$
\end{proof}
\noindent
Let $\mathcal{A}_n^{(\aaaa)}[y(t)]$ be an approximation of the Caputo derivative of order $2-\aaaa$.
$$\mathcal{A}_n^{(\aaaa)}[y(t)]=\dddd{1}{h^\aaaa}\sum_{k=0}^n \llllll_k^{(\aaaa)}y_{n-k}=y_n^{(\aaaa)}+O\llll(h^{2-\aaaa}\rrrr).$$
 Denote
$$\mathcal{B}_n^{(\aaaa)}[y(t)]=\mathcal{A}_n^{(\aaaa)}[y(t)]-s h \mathcal{A}_n^{(1+\aaaa)}[y(t)]=\dddd{1}{h^\aaaa}\sum_{k=0}^n \llll(\llllll_k^{(\aaaa)}-s \llllll_k^{(1+\aaaa)}\rrrr)y_{n-k}.$$
\begin{lem} Let  $y(0)=y'(0)=0$ and 
$$\mathcal{A}_n^{(\aaaa)}[y(t)]=y_n^{(\aaaa)}+O(h^{2-\aaaa}),\quad \mathcal{A}_n^{(1+\aaaa)}[y(t)]=y_n^{(1+\aaaa)}+O(h^{1-\aaaa}).
$$
Then
$$\mathcal{B}_n^{(\aaaa)}[y(t)]=y_{n-s}^{(\aaaa)}+O(h^{2-\aaaa}).
$$
\end{lem}
\noindent
The proof of Lemma 10 is similar to the proof of Lemma 9.
\subsection{Shifted Gr\"unwald formula approximations}
Denote
$$\mathcal{H}_n^{(\aaaa)}[y(t)]=\mathcal{G}_n^{(\aaaa)}[y(t)]-(s-\aaaa/2)h\mathcal{G}_n^{(1+\aaaa)}[y(t)]$$
From Lemma 10  approximation $\mathcal{H}_n^{(\aaaa)}[y(t)]$
is  a second order shifted approximation of the Caputo derivative with a shift parameter $s$.
$$\mathcal{H}_n^{(\aaaa)}[y(t)]=\dddd{1}{h^\aaaa}\sum_{k=0}^{n}\llll(\binom{\aaaa}{k}-(s-\aaaa/2)\binom{1+\aaaa}{k}\rrrr)y_{n-k}.$$
The weights $g_k^{(\aaaa)}$ of approximation  $\mathcal{H}_n^{(\aaaa)}[y(t)]$ satisfy
\begin{align*}
g_k^{(\aaaa)}=\binom{\aaaa}{k}-(s-\aaaa/2)\binom{1+\aaaa}{k}=
&\binom{\aaaa+1}{k}\dddd{(\aaaa+1)(1+\aaaa/2-s)-k}{\aaaa+1}.
\end{align*}
Hence
\begin{align}\label{Gg}
\mathcal{H}_n^{(\aaaa)}[y(t)]=\dddd{1}{h^\aaaa}\sum_{k=0}^{n}g_k^{(\aaaa)}y_{n-k}=y_{n-s}^{(\aaaa)}+O(h^2),
\end{align}
where
$$g_k^{(\aaaa)}=(-1)^k \llll(\dddd{2+\aaaa-2s}{2}-\dddd{k}{1+\aaaa}\rrrr)\binom{\aaaa+1}{k}.$$
Shifted approximation \eqref{Gg} has a second order accuracy when  $y\in C^2[0,t_n]$ and satisfies $y(0)=y'(0)=0$.  When the shift parameter $s=\aaaa/2$ approximation \eqref{Gg} is the shifted Gr\"unwald formula  and when $s=0$,  approximation \eqref{Gg} is a second order approximation of the Caputo derivative \eqref{SOGLA}. 
In \cite{DimitrovMiryanovTodorov2018} we construct a modification of the Gr\"unwald formula approximation which has a second order accuracy for all functions in the class $C^2[0,t_n]$. Now we apply the method from \cite{DimitrovMiryanovTodorov2018} for modifying the last two weights of approximation \eqref{Gg}. The modified shifted approximation has a second order accuracy for all functions of the class $C^2[0,t_n]$ and satisfies 
$$\mathcal{H}_n^{(\aaaa)}[1]=0,\; \mathcal{H}_n^{(\aaaa)}[t]=(t-s)^{1-\aaaa}/\GGGG(2-\aaaa).$$ 
Let $y(t)=y_0+y'_0 t+z(t)$. The function $z(t)$ satisfies $z(0)=z'(0)=0$ and 
\begin{align*}
\mathcal{H}_n^{(\aaaa)}[z(t)]=z_{n-s}^{(\aaaa)}+O(h^2).
\end{align*}
The Caputo derivatives of the functions $y(t)$ and $z(t)$ are related as 
$$y^{(\aaaa)}(t-s h)=y'_0 (t-s h)^{1-\aaaa}/\GGGG(2-\aaaa)+z^{(\aaaa)}(t-s h).$$ 
Hence
$$\mathcal{H}_n^{(\aaaa)}[y(t)]=y_0 \mathcal{H}_n^{(\aaaa)}[1]+y'_0 \mathcal{H}_n^{(\aaaa)}[t]+\mathcal{H}_n^{(\aaaa)}[z(t)],$$
$$\mathcal{H}_n^{(\aaaa)}[y(t)]=y_0 \mathcal{H}_n^{(\aaaa)}[1]+y'_0 \llll(\mathcal{H}_n^{(\aaaa)}[t]-\dddd{(t-s h)^{1-\aaaa}}{\GGGG(2-\aaaa)}\rrrr)+y^{(\aaaa)}(t-s h)+O(h^2),$$
\begin{align}\label{H11}
\mathcal{H}_n^{(\aaaa)}[y(t)]=y^{(\aaaa)}(t-s)+\dddd{1}{h^\aaaa}\llll(\mathcal{W}_N^0y_0 +\mathcal{W}_N^1 y'_0 h\rrrr)+O(h^2),\end{align}
where
$$\mathcal{W}_N^0=h^\aaaa \mathcal{H}_n^{(\aaaa)}[1],\quad \mathcal{W}_N^1=h^{\aaaa-1}\llll( \mathcal{H}_n^{(\aaaa)}[t]-\dddd{(t-s h)^{1-\aaaa}}{\GGGG(2-\aaaa)} \rrrr).
$$
The number $\mathcal{W}_N^1$ satisfies the asymptotic estimate $\mathcal{W}_N^1= O(h^{1+\aaaa})$. The proof is similar to the proof of Lemma 2 in \cite{DimitrovMiryanovTodorov2018}. By approximating $y'_0$ in \eqref{H11} by a first order backward difference 
$h y'_0=y_1-y_0+O(h^2)$
we obtain the second order shifted approximation
$$\mathcal{H}_n^{(\aaaa)}[y(t)]=\dddd{1}{h^\aaaa}\sum_{k=0}^{N}\gggg_k^{(\aaaa)}y_{n-k}=y_{n-s}^{(\aaaa)}+O(h^2),$$
where 
$$\gggg_k^{(\aaaa)}=g_k^{(\aaaa)}=(-1)^k \binom{\aaaa}{k}\frac{\aaaa^2-2 \aaaa s +3 \aaaa-2 k-2 s +2}{2 (\aaaa-k+1)},\quad (0\leq k\leq n-2),$$
\begin{align}\label{LWeights}
\gggg_N^{(\aaaa)}=\mathcal{W}_N^1-\mathcal{W}_N^0,\quad \gggg_{N-1}^{(\aaaa)}=g_{N-1}^{(\aaaa)}-\mathcal{W}_N^1.
\end{align}
The binomial coefficients satisfy the identities \cite{Podlubny1999} 
\begin{align*}
\sum_{k=0}^{N-1}(-1)^k \binom{\aaaa}{k}=(-1)^{N-1}\binom{\aaaa-1}{N-1},
\end{align*}
\begin{align}\label{Identity2}
\sum_{k=0}^{N-1}(-1)^k k\binom{\aaaa}{k}=\aaaa(-1)^{N-1}\binom{\aaaa-2}{N-2}.
\end{align}
Hence
\begin{align*}
\mathcal{W}_N^0=\sum_{k=0}^{N-1}g_k^{(\aaaa)}&=\sum_{k=0}^{N-1}(-1)^k\binom{\aaaa}{k}-(s-\aaaa/2)\sum_{k=0}^{N-1}(-1)^k\binom{1+\aaaa}{k}\\
&=(-1)^{N-1}\llll(\binom{\aaaa-1}{N-1}-(s-\aaaa/2)\binom{\aaaa}{N-1}\rrrr),
\end{align*}
\begin{align}\label{WN0}
\mathcal{W}_N^0=(-1)^{N-1}\binom{\aaaa}{N-1}\dddd{\aaaa^2-2\aaaa s+2\aaaa+2-2N}{2\aaaa}.
\end{align}
Now we obtain a formula for $\mathcal{W}_N^1$. 
$$\mathcal{H}_N^{(\aaaa)}[t]=\sum_{k=0}^{N-1}g_k^{(\aaaa)}(t-k h)=t\sum_{k=0}^{N-1}g_k^{(\aaaa)}-h \sum_{k=0}^{N-1}k g_k^{(\aaaa)}.$$
From \eqref{WN0} and $t=Nh$:
$$t\sum_{k=0}^{N-1}g_k^{(\aaaa)}=h N \mathcal{W}_N^0=h(-1)^{N-1}\binom{\aaaa}{N-1}\dddd{N(\aaaa^2-2\aaaa s+2\aaaa+2-2N)}{2\aaaa}.
$$
From \eqref{Identity2}
\begin{align*}
\sum_{k=0}^{N-1}&k g_k^{(\aaaa)}=\sum_{k=0}^{N-1}k \binom{\aaaa}{k}-(s-\aaaa/2)\sum_{k=0}^{N-1}k \binom{1+\aaaa}{k}\\
&=\aaaa (-1)^{N-1}\binom{\aaaa-2}{N-2}-(s-\aaaa/2)(1+\aaaa) (-1)^{N-1}\binom{\aaaa-1}{N-2}\\
&= (-1)^{N-1}\binom{\aaaa}{N-1}\frac{(N-1)(\aaaa^3-2 \aaaa^2 s +2 \aaaa^2-2 \aaaa N+\aaaa+2 s) }{2\aaaa (\aaaa-1)}.
\end{align*}
Hence
\begin{align*}
\mathcal{W}_N^1=&(-1)^{N-1}\binom{\aaaa}{N-1}\dddd{N(\aaaa^2-2\aaaa s+2\aaaa+2-2N)}{2\aaaa}-\dddd{(N-s)^{1-\aaaa}}{\GGGG(2-\aaaa)}-\\
& (-1)^{N-1}\binom{\aaaa}{N-1}\frac{(N-1)(\aaaa^3-2 \aaaa^2 s +2 \aaaa^2-2 \aaaa N+\aaaa+2 s) }{2\aaaa (\aaaa-1)},
\end{align*}
\begin{align}\label{WN1}
\mathcal{W}_N^1=(-1)^{N-1}\binom{\aaaa}{N-1}W-\dddd{(N-s)^{1-\aaaa}}{\GGGG(2-\aaaa)},
\end{align}
where
$$W=\dddd{(\aaaa-N+1) \left(\aaaa^2-2 \aaaa s +\aaaa-2 N+2 s \right)}{2 \aaaa(\aaaa-1)}.
$$
From \eqref{LWeights}, \eqref{WN0}, \eqref{WN1} we obtain the the formulas for the weights $\gggg_{N-1}^{(\aaaa)}$ and  $\gggg_N^{(\aaaa)}$ and the second order approximation of the Caputo derivative
\begin{equation}\label{2ndSiftedGrunwald}
\dddd{1}{h^\aaaa}\sum_{k=0}^{N}\gggg_k^{(\aaaa)}y_{n-k}=y_{n-s}^{(\aaaa)}+O(h^2),
\end{equation}
where 
$$\gggg_k^{(\aaaa)}=(-1)^k \binom{\aaaa}{k}\frac{\aaaa^2-2 \aaaa s +3 \aaaa-2 k-2 s +2}{2 (\aaaa-k+1)},\quad (0\leq k\leq n-2),$$
$$\gggg_{n-1}^{(\aaaa)}=(-1)^{n-1}\binom{\aaaa-1}{n-2}\bar{W}+\dddd{(n-s)^{1-\aaaa}}{\GGGG(2-\aaaa)},
$$
$$\gggg_n^{(\aaaa)}=(-1)^n\binom{\aaaa-1}{n-2}\frac{\aaaa^2-2 \aaaa s +\aaaa-2 n+2 s +2}{2 (\aaaa-1)}-\dddd{(n-s)^{1-\aaaa}}{\GGGG(2-\aaaa)},
$$
where
$$\bar{W}=\frac{2 \aaaa^3-\aaaa^2 n-4 \aaaa^2 s +4 \aaaa^2+2 \aaaa n s -5 \aaaa n+6 \aaaa+2 n^2-2 n s -4 n+4 s }{2 (\aaaa-1) (\aaaa-n+2)}.
$$
Shifted approximation \eqref{2ndSiftedGrunwald} has a second order accuracy for all functions $y\in C^2[0,t_n]$. Now we obtain the optimal value of the shift parameter, where  approximation \eqref{2ndSiftedGrunwald} has a third order  accuracy.
The Gr\"unwald formula approximation $\mathcal{G}_n^{(\aaaa)}[y(t)]$ has a third order expansion 
$$\dddd{1}{h^\aaaa}\sum_{k=0}^{n}(-1)^k\binom{\aaaa}{k}y_{n-k}=y_n^{(\aaaa)}-\dddd{\aaaa}{2}h y_n^{(1+\aaaa)}+\dddd{\aaaa+3\aaaa^2}{24}h^2 y_n^{(2+\aaaa)}+O(h^3),$$
and $\mathcal{G}_n^{(1+\aaaa)}[y(t)]$ has a second order expansion formula
$$\mathcal{G}_n^{(1+\aaaa)}[y(t)]=\dddd{1}{h^{1+\aaaa}}\sum_{k=0}^{n}(-1)^k\binom{1+\aaaa}{k}y_{n-k}=y_n^{(1+\aaaa)}-\dddd{1+\aaaa}{2}h y_n^{(2+\aaaa)}+O(h^2).$$
Approximation \eqref{2ndSiftedGrunwald} satisfies
$$\mathcal{H}_n^{(\aaaa)}[y(t)]=y_n^{(\aaaa)}-s h y_n^{(1+\aaaa)}+\llll(\dddd{\aaaa+3\aaaa^2}{24}-(\aaaa/2-s)\dddd{1+\aaaa}{2}\rrrr)h^2 y_n^{(2+\aaaa)}+O(h^3),$$
$$\mathcal{H}_n^{(\aaaa)}[y(t)]=y_{n-s}^{(\aaaa)}+\llll(-\dddd{s^2}{2}+\dddd{\aaaa+3\aaaa^2}{24}-(\aaaa/2-s)\dddd{1+\aaaa}{2}\rrrr)h^2 y_n^{(2+\aaaa)}+O(h^3).$$
Shifted approximation \eqref{2ndSiftedGrunwald} has a third order accuracy when the coefficient of the second order term of the expansion formula is zero.
$$-\dddd{s^2}{2}+\dddd{\aaaa+3\aaaa^2}{24}-(\aaaa/2-s)\dddd{1+\aaaa}{2}=0,
$$
$$12 s ^2-12 s (\aaaa+1)+3 \aaaa^2 +5 \aaaa=0,\quad s = \frac{1}{6} \left(3 \aaaa+3\pm \sqrt{3(\aaaa+3)}\right).$$
The optimal shift value of  \eqref{2ndSiftedGrunwald} is
 $$s=S_1(\aaaa) = \dddd{1}{6}\left(3 \aaaa+3- \sqrt{3(\aaaa+3)}\right).$$
Shifted approximation \eqref{2ndSiftedGrunwald} has a third order accuracy when $s=S_1(\aaaa)$.
The first term of the left endpoint expansion of approximations \eqref{2_1}, \eqref{4_2}, \eqref{4_3} and \eqref{5_1} of the Caputo derivative of the power function is $\zeta (-\bbbb)t^{-\aaaa-1}h^{1+\bbbb}/\Gamma (-\aaaa)$. Now we show that  the left endpoint expansion of the shifted Gr\"unwald formula approximation has the same first term of order $1+\bbbb$. The weights of the Gr\"unwald formula approximation have asymptotic expansions  \cite{Elezovic2005,TricomiErdelyi1951}
\begin{align}\label{wk3a}
&w_k^{(\aaaa)}=(-1)^k\binom{\aaaa}{k}=\sum_{m=0}^M\dddd{B_m^{(-\aaaa)}(-\aaaa)}{m!\GGGG(-m-\aaaa)}\dddd{1}{k^{m+\aaaa+1}}+O\llll(\dddd{1}{k^{M+\aaaa+2}}\rrrr),\nonumber\\ 
&w_k^{(\aaaa)}=\dddd{1}{\GGGG(-\aaaa)k^{1+\aaaa}}-\dddd{\aaaa}{2\GGGG(-1-\aaaa)k^{2+\aaaa}}+O\llll(\dddd{1}{k^{3+\aaaa}}\rrrr).\end{align}
Let $y(t)=t^{\bbbb}$ and $0< \bbbb< 1$.  The first term of the left endpoint expansion of the Gr\"unwald formula is equal to the first term of the left endpoint expansion of the 
formula which has weights the first term of the expansion formula \eqref{wk3a} of  $w_k^{(\aaaa)}$:
\begin{align}\label{Formula12}
\dddd{1}{\GGGG(-\aaaa)h^\aaaa}\sum_{k=1}^{n}\dddd{(t-k h)^\bbbb}{k^{1+\aaaa}}.
\end{align}
 The proof is similar to the proof of Lemma 9. 
From expansion formula \eqref{AA1} with $\aaaa=\bbbb,\bbbb=-\aaaa-1$ the first term of the left endpoint expansion of \eqref{Formula12} is $ \zeta (-\bbbb)t^{-\aaaa-1}h^{\bbbb+1}/\Gamma (-\aaaa)$.
 The Gr\"unwald formula approximation of the power function has a second order expansion  when $0< \bbbb < 1$.
\begin{align}\label{2GFA}
\dddd{1}{h^\aaaa}\sum_{k=0}^{n}(-1)^k\binom{\aaaa}{k}(t-k h)^\bbbb=\dddd{\GGGG(\bbbb+1)}{\GGGG(\bbbb-\aaaa+1)}&\llll(t-\dddd{\aaaa h}{2}\rrrr)^{\bbbb-\aaaa}+\\
&\frac{ \zeta (-\bbbb)}{ \Gamma (-\aaaa)t^{\aaaa+1}}h^{\bbbb+1}+O(h^2).\nonumber\\
\end{align}
 By substituting $t=n h$ in \eqref{2GFA} we obtain the  expansion formula
\begin{align*}
\sum_{k=0}^{ n-1}(-1)^k\binom{\aaaa}{k}(n-k)^\bbbb=\dddd{\GGGG(\bbbb+1)}{\GGGG(\bbbb-\aaaa+1)}&\llll(n-\dddd{\aaaa }{2}\rrrr)^{\bbbb-\aaaa}+\\
&\frac{ \zeta (-\bbbb)}{ \Gamma (-\aaaa)n^{1+\aaaa}}+O\llll(\dddd{1}{n^{2+\aaaa-\bbbb}} \rrrr).
\end{align*}
%%%%%%%%%%%%%%%%%%%%%%%%%%%%%%%%%%%%%%%%%%%
\subsection{Numerical solutions of the two-term and three-term FDEs}
The analytical and the numerical solutions of the two-term and the three-term equations are studied in \cite{Diethelm2010,DiethelmSiegmundTuan2017,Dimitrov2015,Dimitrov2016,Dimitrov2017,DimitrovMiryanovTodorov2018,LiChenYe2011,Podlubny1999,ZengZhangKarniadakis2017}. 	
Let
\begin{equation}\tag{**}
\dddd{1}{h^\aaaa}\sum_{k=0}^{n}\llllll_k^{(\aaaa)} y_{n-k}= y_{n-s}^{(\aaaa)}+O\llll(h^{\bbbb(\aaaa)}\rrrr)
\end{equation}
be a shifted approximation for the Caputo derivative of order $\bbbb(\aaaa)\leq 3$. In the paper $\bbbb(\aaaa)=2-\aaaa,2,3$. 
	We derive the numerical solution of the two term equation of order $\min\{\bbbb(2\aaaa),3\}$ and the  numerical solution of the three-term equation of order $\min\{\bbbb(2\aaaa),2\}$,  which use shifted approximation (**) of the Caputo derivative.
\subsubsection{Two-term equation} 
\begin{align}\label{11_1}
y^{(\aaaa)}(t)+y(t)=F(t),\quad y(0)=y_0.
\end{align}
Let $h=1/N$, where $N$ is a positive integer.
 By approximating the Caputo derivative of equation \eqref{11_1} at the point $t_{n-s}=(n-s) h$ with (**)  we obtain
$$\dddd{1}{h^\aaaa}\sum_{k=0}^{n}\llllll_k^{(\aaaa)} y_{n-k}+ y_{n-s}= F_{n-s}+O\llll(h^{\bbbb(\aaaa)}\rrrr).$$
In \cite{Dimitrov2015} we showed that
$$\dddd{1}{2}s(s-1) y_{n-2}+s(2-s) y_{n-1}+\dddd{1}{2}(s-1)(s-2) y_{n}= y_{n-s}+O(h^3).$$
The  numerical solution $\{u_n\}_{n=0}^N$ of equation \eqref{11_1} is computed as
$$\dddd{1}{h^\aaaa}\sum_{k=0}^{n}\llllll_k^{(\aaaa)} u_{n-k}+\dddd{s(s-1)}{2} u_{n-2}+s(2-s) u_{n-1}+\dddd{(s-1)(s-2) }{2}u_{n}= F_{n-s},$$
\begin{align*}
\llll(\llllll_0^{(\aaaa)}+0.5 (s-1)(s-2) h^\aaaa\rrrr) u_{n}+&0.5 h^\aaaa s(s-1) u_{n-2}+\\
0.5 h^\aaaa s&(2-s) u_{n-1}+
\sum_{k=1}^{n}\llllll_k^{(\aaaa)} u_{n-k}= h^\aaaa F_{n-s},
\end{align*}
\begin{align}\tag{NS2(**)}
u_{n}=\dddd{h^\aaaa F_{n-s}-h^\aaaa \llll(0.5s(s-1) u_{n-2}+s(2-s) u_{n-1}\rrrr)-\sum_{k=1}^{n}\llllll_k^{(\aaaa)} u_{n-k}}{\llllll_0^{(\aaaa)}+0.5 (s-1)(s-2) h^\aaaa}.
\end{align}
	In \cite{Dimitrov2016} we showed that 
$$\bar{y}_1=\dddd{y(0)+\Gamma(2-\aaaa)h^\aaaa F(h)}{1+ \Gamma(2-\aaaa)h^\aaaa}$$
is a second order approximation for the value of the solution $y(h)$.
 Numerical solution NS2(**) of  equation \eqref{11_1} has second order initial conditions are $u_0=y_0, u_1=\bar{y}_1$. When the solution of the two-term equation satisfies $y(0)=y'(0)=y''(0)=0$, numerical solution NS2(**) has third order initial conditions $u_0=u_1=0$.
 The two-term equation
\begin{equation}\label{2e2}
y^{(\aaaa)}(t)+ y(t)=0,\quad y(0)=1
\end{equation}
has the solution $y(t)= E_{\aaaa}(-t^\aaaa)$, which has a singularity at the initial point $t=0$. The Gr\"unwald formula and approximations \eqref{2_1}, \eqref{4_2}, \eqref{4_3} and \eqref{5_1} of the Caputo derivative of the power function $t^\aaaa$ have an accuracy $O\llll(h^\aaaa\rrrr)$ for small $n$. The numerical solution of the two-term equation \eqref{2e2}  has an accuracy $O\llll(h^\aaaa\rrrr)$ \cite{DimitrovDimovTodorov2018, JinLazarovZhou2016}. Now we use the method from \cite{DimitrovDimovTodorov2018} for transforming equation \eqref{2e2} into a two-term equation which has a smooth solution. The Miller-Ross derivatives of the solution of equation \eqref{2e2}   satisfy:
$$y^{[n\aaaa]}(t)+y^{[(n-1)\aaaa]}(t)=0,$$
$$y^{[n\aaaa]}(0)=-y^{[(n-1)\aaaa]}(0)=(-1)^n.$$
 Substitute
$$z(t)=y(t)-T_m^{(\aaaa)}(t)=y(t)-\sum_{n=0}^m \dddd{\llll(-t^{\aaaa}\rrrr)^n}{\GGGG(\aaaa n+1)}.$$
The function $z(t)$ has a Caputo derivative of order $\aaaa$
$$z^{(\aaaa)}(t)=y^{(\aaaa)}(t)+\sum_{n=0}^{m-1} \dddd{\llll(- t^{\aaaa}\rrrr)^n}{\GGGG(\aaaa n+1)},$$
 and satisfies the two-term equation
\begin{equation}\label{TwoTerm2}
z^{(\aaaa)}(t)+B z(t)=\dddd{(-1)^{m+1} t^{\aaaa m}}{\GGGG(\aaaa m+1)},\quad z(0)=0.
\end{equation}
When $m\aaaa>3$ the solution of two-term equation \eqref{TwoTerm2} satisfies $z\in C^3[0,1]$ and $z(0)=z'(0)=z''(0)=0$. 
%%%%%%%%%%%%%%%%%%%%%%%%%%%%%%%%%%%%%%%%%%%
\subsubsection{Three-term equation}
\begin{equation}\label{TT1}
2y^{(2\aaaa)}(t)+3y^{(\aaaa)}(t)+y(t)=0,\quad y(0)=1,\quad (0<\aaaa<1/2).
\end{equation}
 In \cite{DimitrovDimovTodorov2018} we study the numerical solutions of the three-term equation which use approximation \eqref{2_4}  of the Caputo derivative. 
The numerical solution of three-term equation \eqref{TT1}, which uses the second order WSGL approximation is studied in \cite{ZengZhangKarniadakis2017}. The analytical solution of equation \eqref{TT1} has a fractional Taylor series expansion
$$y(t)=1+\dddd{y^{(\aaaa)}(0)t^\aaaa}{\GGGG(\aaaa+1)}+\sum_{n=2}^{\infty}\dddd{y^{[n\aaaa]}(0)t^{n\aaaa}}{\GGGG(\aaaa n+1)}.
$$
Compare the smallest power of $t$ in $y(t),\;y^{(\aaaa)}(t)$ and $y^{(2\aaaa)}(t)$. The smallest power is $t^{-\aaaa}$ of the term  $y^{(2\aaaa)}(0)t^{-\aaaa}/\GGGG(1-\aaaa)$, which is the Caputo derivative of order $2\aaaa$ of $y^{(\aaaa)}(0)t^\aaaa/\GGGG(\aaaa+1)$. Therefore the Caputo derivative of the solution of three-term equation \eqref{TT1} satisfies $y^{(\aaaa)}(0)=0$. The Caputo and Miller-Ross derivatives satisfy \cite{DimitrovDimovTodorov2018}:
$$y^{(2\aaaa)}(t)=y^{[2\aaaa]}(t)+\dddd{y^{(\aaaa)}(0)}{\GGGG(1-\aaaa)t^{\aaaa}}=y^{[2\aaaa]}(t).$$
Three-term equation \eqref{TT1} is formulated with the Miller-Ross fractional derivative as
\begin{equation}\label{TT2}
2y^{[2\aaaa]}(t)+3y^{[\aaaa]}(t)+y(t)=0,\quad y(0)=1, y^{(\aaaa)}(0)=0.
\end{equation}
Formulation \eqref{TT2} of three-term equation \eqref{TT1} has the advantage that $0<\aaaa<1$ as well as it has two independent initial conditions  \cite{DimitrovDimovTodorov2018}. Now we derive the analytical solution of three-term equation \eqref{TT2}. By applying fractional differentiation of order $\aaaa$ we obtain
$$2y^{[(n+1)\aaaa]}(t)+3y^{[n\aaaa]}(t)+y^{[(n-1)\aaaa]}(t)=0.$$
Denote $a_n=y^{[n\aaaa]}(0)$. The numbers $a_n$ satisfy
\begin{equation}\label{RR}
2a_{n+1}+3a_n+a_{n-1}=0,\quad a_0=1, a_1=0.
\end{equation}
Recurrence relations \eqref{RR} have a characteristic equation
$2r^2+3r+1=0,$ which has the solutions
 $r_1=-1/2,r_2=-1$. Hence
$$a_n=c_0\llll(-\dddd{1}{2}\rrrr)^n+c_1(-1)^n,$$
where the coefficients $c_0$ and $c_1$ satisfy the system of equations
		\begin{equation*}
	\left|
	\begin{array}{l l}
a_0=c_0+c_1=1,\\
a_1=c_0+2c_1=0. \\
	\end{array}
		\right . 
	\end{equation*}
Therefore $c_0=2,c_1=-1$ and $a_n=2\llll(-1/2\rrrr)^n-(-1)^n$. The solution of three-term equation \eqref{TT2} satisfies
	$$y(t)=1+\sum_{n=1}^{\infty}\dddd{a_n t^{n\aaaa}}{\GGGG(\aaaa n+1)}=2\sum_{n=0}^{\infty}\dddd{(-t^{\aaaa}/2)^n}{\GGGG(\aaaa n+1)}-\sum_{n=0}^{\infty}\dddd{\llll(-t^{\aaaa}\rrrr)^n}{\GGGG(\aaaa n+1)},$$
 $$y(t)=2E_\aaaa\llll(-t^{\aaaa}/2\rrrr)-E_\aaaa\llll(-t^{\aaaa}\rrrr).$$
  Now we transform equation \eqref{TT2} into a three-term FDE, which has a smooth solution. Substitute 
$$z(t)=y(t)-T^{(\aaaa)}_m(t)=y(t)-\sum_{n=0}^{m}\dddd{a_n t^{n\aaaa}}{\GGGG(\aaaa n+1)}$$
 The function $z$ satisfies $z(0)= z^{(\aaaa)}(0)=0$, when $m>1$ and
$$z^{(\aaaa)}(t)=y^{(\aaaa)}(t)-\sum_{n=0}^{m-1}\dddd{a_{n+1} t^{n\aaaa}}{\GGGG(\aaaa n+1)},\quad z^{[2\aaaa]}(t)=y^{[\aaaa]}(t)-\sum_{n=0}^{m-2}\dddd{a_{n+2} t^{n\aaaa}}{\GGGG(\aaaa n+1)}.$$
The  Caputo and Miller-Ross derivatives of the function $z(t)$ are equal  and the function $z(t)$ satisfies the three-term FDE
$$2z^{(2\aaaa)}(t)+3z^{(\aaaa)}(t)+z(t)=-\dddd{a_m t^{m\aaaa}}{\GGGG(\aaaa m+1)}-(3a_{m}+a_{m-1})\dddd{ t^{(m-1)\aaaa}}{\GGGG(\aaaa (m-1)+1)},$$
\begin{equation}\label{ThreeTerm}
2z^{(2\aaaa)}(t)+3z^{(\aaaa)}(t)+z(t)=F(t),z(0)=z^{(\aaaa)}(0)=0,
\end{equation}
where
$$F(t)=\dddd{ 2a_{m+1}t^{(m-1)\aaaa}}{\GGGG(\aaaa (m-1)+1)}-\dddd{a_m t^{m\aaaa}}{\GGGG(\aaaa m+1)}.$$
Now we obtain the numerical solution of three-term equation \eqref{ThreeTerm},  which uses approximation (**) of the Caputo derivative. By approximating the Caputo derivative at $t_{n-s}$ with (**) we obtain
\begin{align*}
\dddd{2}{h^{2\aaaa}}\sum_{k=0}^{n}\llllll_k^{(2\aaaa)} z_{n-k}+\dddd{3}{h^\aaaa}\sum_{k=0}^{n}\llllll_k^{(\aaaa)} z_{n-k}+z_{n-s}= F_{n-s}+O\llll(h^{\bbbb(2\aaaa)}\rrrr).
\end{align*}
%\begin{align*}
%\dddd{2}{h^{2\aaaa}}\sum_{k=0}^{n}\llllll_k^{(2\aaaa)} z_{n-k}+\dddd{3}{h^\aaaa}\sum_{k=0}^{n}\llllll_k^{(\aaaa)} z_{n-k}+s z_{n-1}+(1-s)z_n= F_{n-s}+O\llll(h^{\bbbb(2\aaaa)}\rrrr).
%\end{align*}
The  numerical solution $\{u_n\}_{n=0}^N$ of three-term equation \eqref{ThreeTerm} satisfies
\begin{align*}
\dddd{2}{h^{2\aaaa}}\sum_{k=0}^{n}\llllll_k^{(2\aaaa)} u_{n-k}+\dddd{3}{h^\aaaa}\sum_{k=0}^{n}\llllll_k^{(\aaaa)} u_{n-k}+s u_{n-1}+(1-s)u_n= F_{n-s},
\end{align*}
%%%%%%%%%%%%%%%%%%%%%%%%%%%%%%%%
\begin{align*}
2\sum_{k=0}^{n}\llllll_k^{(2\aaaa)} u_{n-k}+3 h^\aaaa\sum_{k=0}^{n}\llllll_k^{(\aaaa)} u_{n-k}+s h^{2\aaaa} u_{n-1}+(1-s)h^{2\aaaa}u_n= h^{2\aaaa} F_{n-s},
\end{align*}
\begin{align*}
\Big(2\llllll_0^{(2\aaaa)}+3 h^\aaaa\llllll_0^{(\aaaa)}+&h^{2\aaaa}(1-s) \Big)u_{n}=\\
& h^{2\aaaa}F_{n-s}-s h^{2\aaaa} u_{n-1}-\sum_{k=1}^{n}\llll(2\llllll_k^{(2\aaaa)}+3 h^\aaaa \llllll_k^{(\aaaa)}\rrrr) u_{n-k}.
\end{align*}
The numerical solution of  \eqref{ThreeTerm} is computed with
\begin{align}\tag{NS3(*)}
u_{n}=
\dddd{h^{2\aaaa}F_{n-s}-s h^{2\aaaa} u_{n-1}-\sum_{k=1}^{n}\llll(2\llllll_k^{(2\aaaa)}+3 h^\aaaa \llllll_k^{(\aaaa)}\rrrr) u_{n-k}}{2\llllll_0^{(2\aaaa)}+3 h^\aaaa\llllll_0^{(\aaaa)}+h^{2\aaaa}(1-s)}.
 \end{align}
 Numerical solution NS3(**) has third order initial conditions $u_0=u_1=0$, when the solution of three-term equation \eqref{ThreeTerm} satisfies $z(0)=z'(0)=z''(0)=0$.
 The numerical results for the error and the order of  numerical solution NS2\eqref{2ndSiftedGrunwald} of two-term equation \eqref{Equation2} with $\aaaa=0.3,s=0.25$, third order numerical solution NS2\eqref{2ndSiftedGrunwald} of two-term equation \eqref{TwoTerm2} with $\aaaa=0.6,m=5,s=S_1(0.6)=0.2523$ and  numerical solution NS3\eqref{2ndSiftedGrunwald} of three-term equation \eqref{ThreeTerm} with $\aaaa=0.8,m=4,s=0.3$ are presented in Table 8.
%%%%%%%%%%%%%%%%%%%%%%%%%%%%%%%%
\subsection{Shifted approximations of order 2-\textalpha}
Approximations \eqref{2_1},\eqref{4_2}, \eqref{4_3}, \eqref{5_1} of the Caputo derivative satisfy the conditions of Lemma 10.  Now we use the method from Lemma 10 to construct their induced  shifted approximations. The induced shifted approximations  have  an order  $2-\aaaa$ and a second order accuracy at their optimal shift values. In  \cite{Dimitrov2018} we derive the expansion formula  of order $4-\aaaa$ of approximation \eqref{4_3}:
\begin{align}\label{EF43}
\mathcal{A}_n^{(\aaaa)}[y(&t)]=\dddd{1}{\GGGG(-\aaaa)h^\aaaa}\llll( \sum_{k=1}^{n-1}\dddd{y_{n-k}}{k^{1+\aaaa}}-\zzzz(1+\aaaa)y_n \rrrr)=y_n^{(\aaaa)}-\\
&\dddd{\zzzz(\aaaa)}{\GGGG(-\aaaa)}y'_n h^{1-\aaaa}+\dddd{\zzzz(\aaaa-1)}{2\GGGG(-\aaaa)}y''_n h^{2-\aaaa}-\dddd{\zzzz(\aaaa-2)}{6\GGGG(-\aaaa)}y'''_n h^{3-\aaaa}+O\llll(h^{4-\aaaa} \rrrr).\nonumber
\end{align}
By substituting $\aaaa:=\aaaa+1$ in \eqref{EF43} we obtain
\begin{align*}
&h\mathcal{A}_n^{(1+\aaaa)}[y(t)]=\dddd{1}{\GGGG(-\aaaa-1)h^\aaaa}\llll( \sum_{k=1}^{n-1}\dddd{y_{n-k}}{k^{2+\aaaa}}-\zzzz(2+\aaaa)y_n \rrrr)=h y_n^{(\aaaa+1)}-\\
&\dddd{\zzzz(1+\aaaa)}{\GGGG(-\aaaa-1)}y'_n h^{1-\aaaa}+\dddd{\zzzz(\aaaa)}{2\GGGG(-\aaaa-1)}y''_n h^{2-\aaaa}-\dddd{\zzzz(\aaaa-1)}{6\GGGG(-\aaaa-1)}y'''_n h^{3-\aaaa}+O\llll(h^{4-\aaaa} \rrrr).
\end{align*}
The gamma function satisfies $1/\GGGG(-\aaaa-1)=-(\aaaa+1)/\GGGG(-\aaaa)$. Hence
\begin{align*}
&-\dddd{\aaaa+1}{\GGGG(-\aaaa)h^\aaaa}\llll( \sum_{k=1}^{n-1}\dddd{y_{n-k}}{k^{2+\aaaa}}-\zzzz(2+\aaaa)y_n \rrrr)=h y_n^{(\aaaa+1)}-\dddd{(1+\aaaa)\zzzz(\aaaa)}{2\GGGG(-\aaaa)}y''_n h^{2-\aaaa}+\\
&\dddd{(1+\aaaa)\zzzz(1+\aaaa)}{\GGGG(-\aaaa)}y'_n h^{1-\aaaa}+\dddd{(1+\aaaa)\zzzz(\aaaa-1)}{6\GGGG(-\aaaa)}y'''_n h^{3-\aaaa}+O\llll(h^{4-\aaaa} \rrrr).
\end{align*}
Shifted approximation
$\mathcal{B}_n^{(\aaaa)}y(t)=\mathcal{A}_n^{(\aaaa)}y(t)-s h \mathcal{A}_n^{(1+\aaaa)}y(t)$ satisfies
\begin{align}\label{Bna}
\mathcal{B}_n^{(\aaaa)}[y(t)]&=\dddd{1}{\GGGG(-\aaaa)h^\aaaa}\Bigg( \sum_{k=1}^{n-1}w_k^{(\aaaa)}y_{n-k}-(\zzzz(1+\aaaa)+s(1+\aaaa)\zzzz(2+\aaaa))y_n \Bigg)=\nonumber\\
&=y_n^{(\aaaa)}-s h y_n^{(1+\aaaa)}-\dddd{\zzzz(\aaaa)+s(1+\aaaa)\zzzz(1+\aaaa)}{\GGGG(-\aaaa)}y'_n h^{1-\aaaa}+\\
&\qquad\qquad\qquad \dddd{\zzzz(\aaaa-1)+s(1+\aaaa)\zzzz(\aaaa)}{2\GGGG(-\aaaa)}y''_n h^{2-\aaaa}+O\llll(h^{3-\aaaa} \rrrr),\nonumber
\end{align}
where $w_k^{(\aaaa)}=1/k^{1+\aaaa}+s(1+\aaaa)/k^{2+\aaaa}$. From \eqref{Bna} and
$$ y^{(\aaaa)}_n-s h y^{(1+\aaaa)}_n=y^{(\aaaa)}_{n-s}-\dddd{s^2 h^2}{2}y^{(2+\aaaa)}_n+O(h^3)$$
we obtain
\begin{align}\label{Bna2}
\dddd{1}{\GGGG(-\aaaa)h^\aaaa}&\llll( \sum_{k=1}^{n-1}w_k^{(\aaaa)}y_{n-k}-(\zzzz(1+\aaaa)+s(1+\aaaa)\zzzz(2+\aaaa))y_n \rrrr)=\nonumber\\
&y^{(\aaaa)}_{n-s}-\dddd{\zzzz(\aaaa)+s(1+\aaaa)\zzzz(1+\aaaa)}{\GGGG(-\aaaa)}y'_n h^{1-\aaaa}+\\
&\dddd{\zzzz(\aaaa-1)+s(1+\aaaa)\zzzz(\aaaa)}{2\GGGG(-\aaaa)}y''_n h^{2-\aaaa}-\dddd{s^2 h^2}{2}y^{(2+\aaaa)}_n+O\llll(h^{3-\aaaa} \rrrr). \nonumber        
\end{align}
Substitute $h y'_n=y_n-y_{n-1}+h^2 y''_n/2+O(h^3)$ in \eqref{Bna2},
\begin{align}\label{Bna3}
&\dddd{1}{\GGGG(-\aaaa)h^\aaaa}\llll( \sum_{k=1}^{n-1}w_k^{(\aaaa)}y_{n-k}-(\zzzz(1+\aaaa)+s(1+\aaaa)\zzzz(2+\aaaa))y_n \rrrr)=\nonumber\\
&y^{(\aaaa)}_{n-s}-\dddd{\zzzz(\aaaa)+s(1+\aaaa)\zzzz(1+\aaaa)}{\GGGG(-\aaaa)}(y_n-y_{n-1}) h^{-\aaaa}-\dddd{s^2 h^2}{2}y^{(2+\aaaa)}_n+\\
&\dddd{\zzzz(\aaaa-1)+(s(1+\aaaa)-1)\zzzz(\aaaa)-s(1+\aaaa)\zzzz(1+\aaaa)}{2\GGGG(-\aaaa)}y''_n h^{2-\aaaa} +O\llll(h^{3-\aaaa} \rrrr).\nonumber        
\end{align}
Approximation \eqref{4_3} has an induced shifted approximation:
\begin{align}\label{Lap1}
\dddd{1}{\GGGG(-\aaaa)h^\aaaa} \sum_{k=0}^{n-1}w_k^{(\aaaa)}y_{n-k}=y^{(\aaaa)}_{n-s} +O\llll(h^{2-\aaaa} \rrrr),         
\end{align}
where
$$w_k^{(\aaaa)}=\dddd{1}{k^{1+\aaaa}}+\dddd{s(1+\aaaa)}{k^{2+\aaaa}},\qquad (k=2,\cdots,n),$$
$$w_0^{(\aaaa)}=\zzzz (\aaaa)+((\aaaa+1) s -1) \zzzz (\aaaa+1)-(\aaaa+1) s  \zzzz (\aaaa+2),$$
$$w_1^{(\aaaa)}=1+(\aaaa+1) s -\zzzz (\aaaa)-(\aaaa+1) s  \zzzz (\aaaa+1),$$
Shifted approximation \eqref{Lap1} has an order $2-\aaaa$ when the function $y(t)$ satisfies the condition $y(0)=y'(0)=0$.  Approximation \eqref{Lap1} has a second order accuracy when the coefficient of the term of order ${2-\aaaa}$ in expansion formula \eqref{Bna3} is equal to zero.
$$\zzzz(\aaaa-1)+(s(1+\aaaa)-1)\zzzz(\aaaa)-s(1+\aaaa)\zzzz(1+\aaaa)=0,$$
$$s=S_2(\aaaa)=\dddd{\zzzz(\aaaa)-\zzzz(\aaaa-1)}{(1+\aaaa)(\zzzz(\aaaa)-\zzzz(1+\aaaa))}.$$
The numerical results for the error and order of  numerical solution NS2\eqref{Lap1} of  two-term equation \eqref{TwoTerm2} with $\aaaa=0.3,m=7,s=0.25$ and $\aaaa=0.6,m=4,s=S_2(0.6)=0.2515$ and  numerical solution NS3\eqref{Lap1}  of three-term equation \eqref{ThreeTerm} with $\aaaa=0.4,m=6,s=0.2$ are presented in Table 9.

The L1 approximation has a second order expansion formula
\begin{align*}
\mathcal{A}_n^{(\aaaa)}[y(t)]=\dddd{1}{\GGGG(2-\aaaa)h^\aaaa}\sum_{k=1}^{n-1}w_k^{(\aaaa)}y_{n-k}=y_n^{(\aaaa)}+\dddd{\zzzz(\aaaa-1)}{\GGGG(2-\aaaa)}y''_n h^{2-\aaaa}+O\llll(h^{2} \rrrr),
\end{align*}
and $\mathcal{A}_n^{(1+\aaaa)}[y(t)]$ has an expansion of order $2-\aaaa$
\begin{align*}
\dddd{1}{\GGGG(1-\aaaa)h^{\aaaa+1}}\sum_{k=1}^{n-1}w_k^{(1+\aaaa)}y_{n-k}=y_n^{(1+\aaaa)}+
\dddd{\zzzz(\aaaa)}{\GGGG(1-\aaaa)}y''_n h^{1-\aaaa}+O\llll(h^{2-\aaaa} \rrrr).
\end{align*}
Approximation $\mathcal{B}_n^{(\aaaa)}[y(t)]=\mathcal{A}_n^{(\aaaa)}[y(t)]-s h \mathcal{A}_n^{(1+\aaaa)}[y(t)]$ satisfies
\begin{align*}
\mathcal{B}_n^{(\aaaa)}[y(t)]=\dddd{1}{\GGGG(2-\aaaa)h^\aaaa}\sum_{k=1}^{n-1}&\llll(w_k^{(\aaaa)}-s (1-\aaaa)w_k^{(1+\aaaa)}\rrrr)y_{n-k}=y_{n-s}^{(\aaaa)}+\\
&\dddd{\zzzz(\aaaa-1)-s (1-\aaaa)\zzzz(\aaaa)}{\GGGG(2-\aaaa)}y''_n h^{2-\aaaa}+O\llll(h^{2} \rrrr).
\end{align*}
The L1 approximation  has an induced shifted approximation:
\begin{align}\label{ShiftedL1}
\dddd{1}{\GGGG(2-\aaaa)h^\aaaa} \sum_{k=0}^{n-1}w_k^{(\aaaa)}y_{n-k}=y^{(\aaaa)}_{n-s} +O\llll(h^{2-\aaaa} \rrrr),         
\end{align}
where
$$w_0^{(\aaaa)}=1+s (\aaaa-1),w_1^{(\aaaa)}=2^{1-\aaaa}+s (\aaaa-1)2^{-\aaaa}-2(s (\aaaa-1)+1),$$
\begin{align*}
w_k^{(\aaaa)}=(k+1)&^{1-\aaaa}-2k^{1-\aaaa}+(k-1)^{1-\aaaa}+\\
&s (\aaaa-1)\llll( (k+1)^{-\aaaa}-2k^{-\aaaa}+(k-1)^{-\aaaa}  \rrrr),\quad (2\leq k \leq n-1),
\end{align*}
\begin{align*}
w_n^{(\aaaa)}=(n-1)^{1-\aaaa}-n^{1-\aaaa}+s (\aaaa-1)\llll( (n-1)^{-\aaaa}-n^{-\aaaa}  \rrrr).
\end{align*}
The optimal shift value of \eqref{ShiftedL1}, where the approximation has a second order accuracy is
$$s=S_3(\aaaa)=\dddd{\zzzz(\aaaa-1)}{(1-\aaaa)\zzzz(\aaaa)}.$$
The numerical results for the error and order of  numerical solution NS2\eqref{ShiftedL1}  of  two-term equation \eqref{TwoTerm2} with $\aaaa=0.3,m=7,s=0.25$ and  $\aaaa=0.6,m=4,s=S_3(0.6)=0.3164$ and  numerical solution NS3\eqref{ShiftedL1}  of three-term equation \eqref{ThreeTerm} with $\aaaa=0.4,m=5,s=0.2$ are presented in Table 10.
Using the method from Lemma 10, we obtain the induced shifted approximations of the Caputo derivative of approximations \eqref{4_2} and \eqref{5_1}.\\
\begin{align}\label{App1}
\dddd{1}{2\GGGG(1-\aaaa)h^\aaaa} \sum_{k=0}^{n-1}w_k^{(\aaaa)}y_{n-k}=y^{(\aaaa)}_{n-s} +O\llll(h^{2-\aaaa} \rrrr),       
\end{align}
where
$$w_0^{(\aaaa)}=1+\aaaa s -2 \aaaa s  \zzzz (\aaaa+1)-2 \zzzz (\aaaa),$$
$$w_1^{(\aaaa)}=\dddd{1}{2^{\aaaa}}+\dddd{\aaaa s}{2^{\aaaa+1}}  +2 \aaaa s  \zzzz (\aaaa+1)+2 \zzzz (\aaaa),$$
\begin{align*}
w_k^{(\aaaa)}=\dddd{1}{(k+1)^{\aaaa}}-\dddd{1}{(k-1)^{\aaaa}}+ \dddd{\aaaa s}{(k+1)^{\aaaa+1}}-\dddd{\aaaa s}{(k-1)^{\aaaa+1}},
\end{align*}
for $2\leq k\leq n$. Shifted approximation \eqref{App1} has an optimal shift value
$$s=S_4(\aaaa)=\dddd{2\zzzz(\aaaa-1)-\zzzz(\aaaa)}{\aaaa(\zzzz(\aaaa+1)-2\zzzz(\aaaa))}.$$
%%%%%%%%%%%%%%%%%%%%%%%%%%%%%%%%%%%%%%%%%%%%%%%%
\begin{align}\label{App2}
\dddd{1}{\GGGG(1-\aaaa)h^\aaaa} \sum_{k=0}^{n-1}w_k^{(\aaaa)}y_{n-k}=y^{(\aaaa)}_{n-s} +O\llll(h^{2-\aaaa} \rrrr),     
\end{align}
where
$$w_0^{(\aaaa)}=2^\aaaa (2 \aaaa s +1)-\aaaa s \llll(2^{\aaaa+1}-1\rrrr)   \zzzz (\aaaa+1)-\llll(2^\aaaa-1\rrrr) \zzzz (\aaaa),$$
$$w_1^{(\aaaa)}=2^\aaaa \llll(2\aaaa s \llll(3^{-\aaaa-1}-1\rrrr)  +3^{-\aaaa}-1\rrrr)+\aaaa s \llll(2^{\aaaa+1}-1\rrrr)   \zzzz (\aaaa+1)+\llll(2^\aaaa-1\rrrr) \zzzz (\aaaa),$$
\begin{align*}
w_k^{(\aaaa)}=2^\aaaa \llll(\dddd{1}{(2 k+1)^{\aaaa}}-\dddd{1}{(2 k-1)^{\aaaa}}+ \dddd{2 \aaaa s }{(2 k+1)^{\aaaa+1}}-\dddd{2 \aaaa s }{(2 k-1)^{\aaaa+1}}\rrrr),
\end{align*}
for $2\leq k\leq n$.  Approximation \eqref{App2} has an optimal shift value
$$s=S_5(\aaaa)=\frac{\llll(2^\aaaa-1\rrrr) \zzzz (\aaaa)-\llll(2^\aaaa-2\rrrr) \zzzz (\aaaa-1)}{\aaaa (\llll(2^{\aaaa+1}-2\rrrr) \zzzz (\aaaa)-\llll(2^{\aaaa+1}-1\rrrr) \zzzz (\aaaa+1))}.$$
\begin{figure}[ht]
  \centering
  \caption{Graph of the optimal shift values $S_0(\aaaa),S_1(\aaaa)$, $S_2(\aaaa),S_3(\aaaa),S_4(\aaaa)$ and $S_5(\aaaa)$ of shifted approximations \eqref{Grunwald1},  \eqref{2ndSiftedGrunwald}, \eqref{Lap1}, \eqref{ShiftedL1}, \eqref{App1}, \eqref{App2}.} 
  \includegraphics[width=0.6\textwidth]{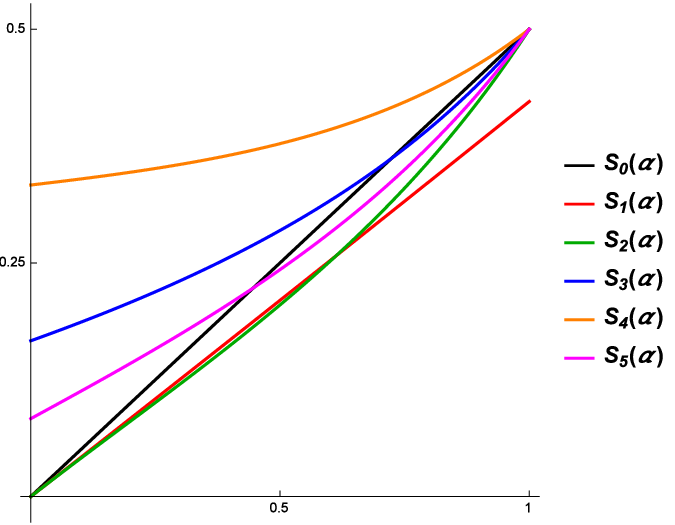}
\end{figure}\\
Shifted approximations  \eqref{Lap1}, \eqref{ShiftedL1}, \eqref{App1} and \eqref{App2} have an order $2-\aaaa$ when $y\in C^2[0,t_n]$ and $y(0)=y'(0)=0$, and second order accuracy at their optimal shift values.
The graphs of the optimal shift values  $S_0(\aaaa)$, $S_1(\aaaa)$, $S_2(\aaaa)$, $S_3(\aaaa)$, $S_4(\aaaa)$, $S_5(\aaaa)$ of  shifted approximations \eqref{Grunwald1},  \eqref{2ndSiftedGrunwald}, \eqref{Lap1}, \eqref{ShiftedL1}, \eqref{App1}, \eqref{App2} of the Caputo derivative, for $0<\aaaa<1$, are given in Figure 3. The numerical results for the  error and order of  numerical solutions NS2\eqref{App1} and NS2\eqref{App2}  of  two-term equation \eqref{TwoTerm2} with $\aaaa=0.3,m=7,s=0.25$, second order numerical solutions NS2\eqref{App1} and NS2\eqref{App1} of two-term equation \eqref{TwoTerm2} with $\aaaa=0.6,m=4,s=S_4(0.6)=0.3926$ and $\aaaa=0.5,m=5,s=S_5(0.5)=0.2428$ and  numerical solutions NS3\eqref{App1} and NS3\eqref{App2} of order $2-2\aaaa$ of three-term equation \eqref{ThreeTerm} with $\aaaa=0.4,m=5,s=0.2$. are presented in Table 11 and Table 12.
	\section{Conclusions}
 In the present paper we derive approximations of the Caputo derivative and their asymptotic expansions related to the midpoint approximation of the integral in the definition of the Caputo derivative. The L1 approximation and approximation \eqref{5_1} of the Caputo derivative have an order $2-\aaaa$ and properties \eqref{2_2} of the weights. According to the experimental results presented in the paper and the results from additional experiments with standard FDEs, approximation \eqref{5_1} has the advantage that the numerical solutions using \eqref{5_1} have smaller errors, which is explained with the smaller coefficient of the term of order ${2-\aaaa}$ in the expansion formula and the smaller truncation error of the approximation. In section 6 we derive the induced shifted approximations, and their optimal shift values, of the Grunwald formula and the approximations of the Caputo derivative studied in the paper.  A question for future work is to derive higher order shifted approximations of the Caputo derivative and to apply the approximations for numerical solution of ordinary and partial fractional differential equations.
%%%%%%%%%%%%%%%%%%%%%%%%%  1  %%%%%%%%%%%%%%%%%%%%%%%%%%%%%%
\section{Experimental results}
The results of the numerical experiments  are presented in Tables 1-12.
	\setlength{\tabcolsep}{0.3em}
{ \renewcommand{\arraystretch}{1.1}
		\begin{table}[ht]
	\caption{ Maximum error and order of numerical solution NS1\eqref{2_1} of order $2-\aaaa$ of two-term equation \eqref{Equation1} and $\aaaa=0.25$, equation \eqref{Equation2} and $\aaaa=0.5$ and equation \eqref{Equation3} with $\aaaa=0.75$.}
	\small
	\centering
  \begin{tabular}{ |l | c  c | c  c | c  c| }
		\hline
		\multirow{2}*{ $\quad \boldsymbol{h}$}  & \multicolumn{2}{c|}{$\boldsymbol{\aaaa=0.25}$} & \multicolumn{2}{c|}{$\boldsymbol{\aaaa=0.5}$}  & \multicolumn{2}{c|}{$\boldsymbol{\aaaa=0.75}$} \\
		\cline{2-7}
   & $Error$ & $Order$  & $Error$ & $Order$  & $Error$ & $Order$ \\
		\hline
$0.0125$   & $0.4850\times 10^{-3}$ & $1.6660$ & $0.4009\times 10^{-3}$  & $1.4695$ & $0.1352\times 10^{-1}$ & $1.2280$ \\
$0.00625$  & $0.1510\times 10^{-3}$ & $1.6838$ & $0.1438\times 10^{-3}$  & $1.4792$ & $0.5729\times 10^{-2}$ & $1.2385$ \\
$0.003125$ & $0.4656\times 10^{-4}$ & $1.6970$ & $0.5134\times 10^{-4}$  & $1.4857$ & $0.2418\times 10^{-2}$ & $1.2442$ \\
\hline
  \end{tabular}
	\end{table}
	}
%%%%%%%%%%%%%%%%%%%%%%%%%%  2  %%%%%%%%%%%%%%%%%%%%%%%%%%%%%
	\setlength{\tabcolsep}{0.3em}
{ \renewcommand{\arraystretch}{1.1}
		\begin{table}[ht]
	\caption{ Maximum error and order of numerical solution NS1\eqref{10_2} of order $1-\aaaa$ of two-term equation \eqref{Equation1} and $\aaaa=0.25$, equation \eqref{Equation2} and $\aaaa=0.5$ and equation \eqref{Equation3} with $\aaaa=0.75$.}
	\small
	\centering
  \begin{tabular}{ |l | c  c | c  c | c  c| }
		\hline
		\multirow{2}*{ $\quad \boldsymbol{h}$}  & \multicolumn{2}{c|}{$\boldsymbol{\aaaa=0.25}$} & \multicolumn{2}{c|}{$\boldsymbol{\aaaa=0.5}$}  & \multicolumn{2}{c|}{$\boldsymbol{\aaaa=0.75}$} \\
		\cline{2-7}
   & $Error$ & $Order$  & $Error$ & $Order$  & $Error$ & $Order$ \\
		\hline
$0.0125$   & $0.2132\times 10^{-1}$ & $0.7606$ & $0.5002\times 10^{-1}$  & $0.5136$ & $0.4984\times 10^{0}$ & $0.3052$ \\
$0.00625$  & $0.1262\times 10^{-1}$ & $0.7562$ & $0.3514\times 10^{-1}$  & $0.5096$ & $0.4061\times 10^{0}$ & $0.2955$ \\
$0.003125$ & $0.7486\times 10^{-2}$ & $0.7536$ & $0.2473\times 10^{-1}$  & $0.5067$ & $0.3324\times 10^{0}$ & $0.2887$ \\
\hline
  \end{tabular}
	\end{table}
	}
	%\clearpage
%%%%%%%%%%%%%%%%%%%%%%%%%%  3  %%%%%%%%%%%%%%%%%%%%%%%%%%%%%
\setlength{\tabcolsep}{0.3em}
{ \renewcommand{\arraystretch}{1.1}
		\begin{table}[ht]
	\caption{ Maximum error and order of numerical solution NS1\eqref{13_2} of order $2-\aaaa$ of two-term equation \eqref{Equation1} and $\aaaa=0.25$, equation \eqref{Equation2} and $\aaaa=0.5$ and equation \eqref{Equation3} with $\aaaa=0.75$.}
		%\vspace{-0.2cm}
	\small
	\centering
  \begin{tabular}{ |l | c  c | c  c | c  c| }
		\hline
		\multirow{2}*{ $\quad \boldsymbol{h}$}  & \multicolumn{2}{c|}{$\boldsymbol{\aaaa=0.25}$} & \multicolumn{2}{c|}{$\boldsymbol{\aaaa=0.5}$}  & \multicolumn{2}{c|}{$\boldsymbol{\aaaa=0.75}$} \\
		\cline{2-7}
   & $Error$ & $Order$  & $Error$ & $Order$  & $Error$ & $Order$ \\
		\hline
$0.0125$   & $0.3891\times 10^{-3}$ & $1.6940$ & $0.3568\times 10^{-3}$  & $1.4799$ & $0.1298\times 10^{-1}$ & $1.2334$ \\
$0.00625$  & $0.1192\times 10^{-3}$ & $1.7064$ & $0.1273\times 10^{-3}$  & $1.4868$ & $0.5489\times 10^{-2}$ & $1.2414$ \\
$0.003125$ & $0.3631\times 10^{-4}$ & $1.7152$ & $0.4529\times 10^{-4}$  & $1.4912$ & $0.2314\times 10^{-2}$ & $1.2460$ \\
\hline
  \end{tabular}
	\end{table}
	}

%%%%%%%%%%%%%%%%%%%%%%%%%%  4  %%%%%%%%%%%%%%%%%%%%%%%%%%%%%
		\setlength{\tabcolsep}{0.3em}
{ \renewcommand{\arraystretch}{1.1}
		\begin{table}[ht]
	\caption{ Maximum error and order of second order numerical solution NS1\eqref{14_1}  of two-term equation \eqref{Equation1} and $\aaaa=0.25$, equation \eqref{Equation2} and $\aaaa=0.5$ and equation \eqref{Equation3} with $\aaaa=0.75$.}
		%\vspace{-0.2cm}
	\small
	\centering
  \begin{tabular}{ |l | c  c | c  c | c  c| }
		\hline
		\multirow{2}*{ $\quad \boldsymbol{h}$}  & \multicolumn{2}{c|}{$\boldsymbol{\aaaa=0.25}$} & \multicolumn{2}{c|}{$\boldsymbol{\aaaa=0.5}$}  & \multicolumn{2}{c|}{$\boldsymbol{\aaaa=0.75}$} \\
		\cline{2-7}
   & $Error$ & $Order$  & $Error$ & $Order$  & $Error$ & $Order$ \\
		\hline
$0.0125$   & $0.6259\times 10^{-4}$ & $1.8774$ & $0.2387\times 10^{-4}$  & $1.9580$ & $0.1878\times 10^{-2}$ & $1.9220$ \\
$0.00625$  & $0.1642\times 10^{-4}$ & $1.9310$ & $0.6107\times 10^{-5}$  & $1.9660$ & $0.4825\times 10^{-3}$ & $1.9605$ \\
$0.003125$ & $0.4218\times 10^{-5}$ & $1.9602$ & $0.1554\times 10^{-5}$  & $1.9748$ & $0.1225\times 10^{-3}$ & $1.9781$ \\
\hline
  \end{tabular}
	\end{table}
	}
	%\clearpage
%\vspace{-0.5cm}
%%%%%%%%%%%%%%%%%%%%%%%%% 5 %%%%%%%%%%%%%%%%%%%%%%%%%%%%%%%%%%
\setlength{\tabcolsep}{0.3em}
{ \renewcommand{\arraystretch}{1.1}
		\begin{table}[ht]
	\caption{ Maximum error and order of numerical solution \eqref{16_2} of the fractional subdiffusion equation \eqref{17_1} at time $t=1$ for $\aaaa=0.25,\aaaa=0.5,\aaaa=0.75$.}
		%\vspace{-0.2cm}
	\small
	\centering
  \begin{tabular}{ |l | c  c | c  c | c  c| }
		\hline
		\multirow{2}*{ $\quad \boldsymbol{h}$}  & \multicolumn{2}{c|}{$\boldsymbol{\aaaa=0.25}$} & \multicolumn{2}{c|}{$\boldsymbol{\aaaa=0.5}$}  & \multicolumn{2}{c|}{$\boldsymbol{\aaaa=0.75}$} \\
		\cline{2-7}
   & $Error$ & $Order$  & $Error$ & $Order$  & $Error$ & $Order$ \\
		\hline
$0.0125$   & $0.2917\times 10^{-4}$ & $1.7773$ & $0.1514\times 10^{-3}$ & $1.5084$ & $0.7342\times 10^{-3}$  & $1.2499$ \\ 
$0.00625$  & $0.8528\times 10^{-5}$ & $1.7742$ & $0.5328\times 10^{-4}$ & $1.5067$ & $0.3086\times 10^{-3}$  & $1.2503$ \\ 
$0.003125$ & $0.2498\times 10^{-5}$ & $1.7712$ & $0.1877\times 10^{-4}$ & $1.5053$ & $0.1297\times 10^{-3}$  & $1.2504$ \\
\hline
  \end{tabular}
	\end{table}
	}
	%\clearpage
%%%%%%%%%%%%%%%%%%%%%%%%  6  %%%%%%%%%%%%%%%%%%%%%%%%%%%%%%%%%%%%%%%%%%%%%%%%%%%%
%\vspace{-0.5cm}
\setlength{\tabcolsep}{0.3em}
{ \renewcommand{\arraystretch}{1.1}
		\begin{table}[!h]
		\caption{ Maximum error and order of numerical solution \eqref{16_2} of the fractional subdiffusion equation \eqref{17_2}  at time $t=1$ for $\aaaa=0.25,\aaaa=0.5,\aaaa=0.75$.}
			%\vspace{-0.2cm}
	\small
	\centering
  \begin{tabular}{ |l | c  c | c  c | c  c| }
		\hline
		\multirow{2}*{ $\quad \boldsymbol{h}$}  & \multicolumn{2}{c|}{$\boldsymbol{\aaaa=0.25}$} & \multicolumn{2}{c|}{$\boldsymbol{\aaaa=0.5}$}  & \multicolumn{2}{c|}{$\boldsymbol{\aaaa=0.75}$} \\
		\cline{2-7}
   & $Error$ & $Order$  & $Error$ & $Order$  & $Error$ & $Order$ \\
		\hline 
$0.0125$   & $0.4063\times 10^{-3}$  & $1.1216$  & $0.8783\times 10^{-3}$  & $1.0818$    & $0.1641\times 10^{-2}$  & $1.0661$  \\ 
$0.00625$  & $0.1942\times 10^{-3}$  & $1.0652$  & $0.4251\times 10^{-3}$  & $1.0471$    & $0.7947\times 10^{-3}$  & $1.0453$  \\ 
$0.003125$ & $0.9483\times 10^{-4}$  & $1.0339$  & $0.2085\times 10^{-3}$  & $1.0274$    & $0.3884\times 10^{-3}$  & $1.0327$   \\
\hline
  \end{tabular}
	\end{table}
	}
	%\clearpage
	%%%%%%%%%%%%%%%%%%%%%%  7  %%%%%%%%%%%%%%%%%%%%%%%%
	\setlength{\tabcolsep}{0.3em}
{ \renewcommand{\arraystretch}{1.1}
		\begin{table}[ht]
	\caption{ Maximum error and order of numerical solution \eqref{16_2} of the fractional subdiffusion equation \eqref{19_1}  at time $t=1$ for $\aaaa=0.25$ and $m=8$, $\aaaa=0.5$ and $m=4$ and $\aaaa=0.75,m=2$.}
	\small
	\centering
  \begin{tabular}{ |l | c  c | c  c | c  c| }
		\hline
		\multirow{2}*{ $\quad \boldsymbol{h}$}  & \multicolumn{2}{c|}{$\boldsymbol{\aaaa=0.25,m=8}$} & \multicolumn{2}{c|}{$\boldsymbol{\aaaa=0.5,m=4}$}  & \multicolumn{2}{c|}{$\boldsymbol{\aaaa=0.75,m=2}$} \\
		\cline{2-7}
   & $Error$ & $Order$  & $Error$ & $Order$  & $Error$ & $Order$ \\
		\hline 
$0.0125$   & $0.2484\times 10^{-4}$  & $1.8641$  & $0.9292\times 10^{-4}$  & $1.5449$    & $0.4714\times 10^{-3}$ & $1.2690$ \\ 
$0.00625$  & $0.6842\times 10^{-5}$  & $1.8534$  & $0.3212\times 10^{-4}$  & $1.5328$    & $0.1966\times 10^{-3}$ & $1.2615$ \\ 
$0.003125$ & $0.1916\times 10^{-5}$  & $1.8431$  & $0.1117\times 10^{-4}$  & $1.5238$    & $0.8228\times 10^{-4}$ & $1.2570$  \\
\hline
  \end{tabular}
	\end{table}
	}
	%\clearpage
	%%%%%%%%%%%%%%%%%%%%%%  8  %%%%%%%%%%%%%%%%%%%%%%%%
	\setlength{\tabcolsep}{0.3em}
{ \renewcommand{\arraystretch}{1.02}
\begin{table}[ht]
	\caption{ Maximum error and order of second order numerical solution NS2\eqref{2ndSiftedGrunwald} of  equation \eqref{Equation2} with $\aaaa=0.3,s=0.25$, third order numerical solution NS2\eqref{2ndSiftedGrunwald} of two-term equation \eqref{TwoTerm2} with $\aaaa=0.6,m=5,s=S_1(0.6)=0.2523$ and second order numerical solution NS3\eqref{2ndSiftedGrunwald} of three-term equation \eqref{ThreeTerm} with $\aaaa=0.8,m=4,s=0.3$.}
		\vspace{-0.2cm}
	\small
	\centering
  \begin{tabular}{ |l | c  c | c  c | c  c| }
		\hline
		\multirow{2}*{ $\quad \boldsymbol{h}$}  & \multicolumn{2}{c|}{$\boldsymbol{\aaaa=0.3,s=0.25}$} & \multicolumn{2}{c|}{$\boldsymbol{\aaaa=0.6,s=0.252}$}  & \multicolumn{2}{c|}{$\boldsymbol{\aaaa=0.8,s=0.3}$} \\
		\cline{2-7}
   & $Error$ & $Order$  & $Error$ & $Order$  & $Error$ & $Order$ \\
		\hline
$0.0125$   & $0.1260\times 10^{-4}$ & $2.0023$ & $0.3703\times 10^{-7}$  & $2.9815$ & $0.3234\times 10^{-4}$ & $1.9431$ \\
$0.00625$  & $0.3149\times 10^{-5}$ & $2.0005$ & $0.4656\times 10^{-8}$  & $2.9916$ & $0.8231\times 10^{-5}$ & $1.9743$ \\
$0.003125$ & $0.7875\times 10^{-6}$ & $1.9998$ & $0.5835\times 10^{-9}$  & $2.9961$ & $0.2075\times 10^{-5}$ & $1.9880$ \\
\hline
  \end{tabular}
	\end{table}	}
	\vspace{-0.5cm}
%%%%%%%%%%%%%%%%%%%%%%  9  %%%%%%%%%%%%%%%%%%%%%%%%
		\setlength{\tabcolsep}{0.3em}
{ \renewcommand{\arraystretch}{1.02}
	\begin{table}[ht]
	\caption{ Maximum error and order of  numerical solution NS2\eqref{Lap1} of order $2-\aaaa$ of  two-term equation \eqref{TwoTerm2} with $\aaaa=0.3,m=7,s=0.25$, second order numerical solution NS2\eqref{Lap1} of two-term equation \eqref{TwoTerm2} with $\aaaa=0.6,m=4,s=S_2(0.6)=0.2515$ and  numerical solution NS3\eqref{Lap1} of order $2-2\aaaa$ of three-term equation \eqref{ThreeTerm} with $\aaaa=0.4,m=6,s=0.2$.}
		\vspace{-0.2cm}
	\small
	\centering
  \begin{tabular}{ |l | c  c | c  c | c  c| }
		\hline
		\multirow{2}*{ $\quad \boldsymbol{h}$}  & \multicolumn{2}{c|}{$\boldsymbol{\aaaa=0.3,s=0.25}$} & \multicolumn{2}{c|}{$\boldsymbol{\aaaa=0.6,s=0.251}$}  & \multicolumn{2}{c|}{$\boldsymbol{\aaaa=0.4,s=0.2}$} \\
		\cline{2-7}
   & $Error$ & $Order$  & $Error$ & $Order$  & $Error$ & $Order$ \\
		\hline
$0.0125$   & $0.1444\times 10^{-4}$ & $1.6669$ & $0.1190\times 10^{-5}$  & $1.8400$ & $0.2769\times 10^{-3}$ & $1.0590$ \\
$0.00625$  & $0.4523\times 10^{-5}$ & $1.6747$ & $0.3197\times 10^{-6}$  & $1.8959$ & $0.1252\times 10^{-3}$ & $1.1458$ \\
$0.003125$ & $0.1411\times 10^{-5}$ & $1.6801$ & $0.8394\times 10^{-7}$  & $1.9292$ & $0.5552\times 10^{-4}$ & $1.1727$ \\
\hline
  \end{tabular}
	\end{table}	}
	\vspace{-0.5cm}
		\setlength{\tabcolsep}{0.3em}
{ \renewcommand{\arraystretch}{1.02}
		\begin{table}[ht]
	\caption{ Maximum error and order of  numerical solution NS2\eqref{ShiftedL1} of order $2-\aaaa$ of  two-term equation \eqref{TwoTerm2} with $\aaaa=0.3,m=7,s=0.25$, second order numerical solution NS2\eqref{ShiftedL1} of two-term equation \eqref{TwoTerm2} with $\aaaa=0.6,m=4,s=S_3(0.6)=0.3164$ and  numerical solution NS3\eqref{ShiftedL1} of order $2-2\aaaa$ of three-term equation \eqref{ThreeTerm} with $\aaaa=0.4,m=5,s=0.2$.}
		\vspace{-0.2cm}
	\small
	\centering
  \begin{tabular}{ |l | c  c | c  c | c  c| }
		\hline
		\multirow{2}*{ $\quad \boldsymbol{h}$}  & \multicolumn{2}{c|}{$\boldsymbol{\aaaa=0.3,s=0.25}$} & \multicolumn{2}{c|}{$\boldsymbol{\aaaa=0.6,s=0.316}$}  & \multicolumn{2}{c|}{$\boldsymbol{\aaaa=0.4,s=0.2}$} \\
		\cline{2-7}
   & $Error$ & $Order$  & $Error$ & $Order$  & $Error$ & $Order$ \\
		\hline
$0.0125$   & $0.4408\times 10^{-5}$ & $1.8494$ & $0.1652\times 10^{-5}$  & $2.0178$ & $0.3622\times 10^{-3}$  & $1.1009$ \\
$0.00625$  & $0.1233\times 10^{-5}$ & $1.8379$ & $0.4093\times 10^{-6}$  & $2.0126$ & $0.1608\times 10^{-3}$  & $1.1716$ \\
$0.003125$ & $0.3482\times 10^{-6}$ & $1.8244$ & $0.1017\times 10^{-6}$  & $2.0083$ & $0.7030\times 10^{-4}$  & $1.1938$ \\
\hline
  \end{tabular}
	\end{table}	}
%\clearpage
%%% 11  %%
		\setlength{\tabcolsep}{0.3em}
{ \renewcommand{\arraystretch}{1.02}
			\begin{table}[ht]
	\caption{ Maximum error and order of  numerical solution NS2\eqref{App1} of order $2-\aaaa$ of  two-term equation \eqref{TwoTerm2} with $\aaaa=0.3,m=7,s=0.25$, second order numerical solution NS2\eqref{App1} of two-term equation \eqref{TwoTerm2} with $\aaaa=0.6,m=4,s=S_4(0.6)=0.3926$ and  numerical solution NS3\eqref{App1} of order $2-2\aaaa$ of three-term equation \eqref{ThreeTerm} with $\aaaa=0.4,m=5,s=0.2$.}
		\vspace{-0.2cm}
	\small
	\centering
  \begin{tabular}{ |l | c  c | c  c | c  c| }
		\hline
		\multirow{2}*{ $\quad \boldsymbol{h}$}  & \multicolumn{2}{c|}{$\boldsymbol{\aaaa=0.3,s=0.25}$} & \multicolumn{2}{c|}{$\boldsymbol{\aaaa=0.6,s=0.393}$}  & \multicolumn{2}{c|}{$\boldsymbol{\aaaa=0.4,s=0.2}$} \\
		\cline{2-7}
   & $Error$ & $Order$  & $Error$ & $Order$  & $Error$ & $Order$ \\
		\hline
$0.0125$   & $0.6032\times 10^{-5}$ & $1.3520$ & $0.4187\times 10^{-5}$  & $1.9825$ & $0.4543\times 10^{-3}$ & $1.1289$ \\
$0.00625$  & $0.2183\times 10^{-5}$ & $1.4663$ & $0.1055\times 10^{-5}$  & $1.9881$ & $0.1993\times 10^{-3}$ & $1.1892$ \\
$0.003125$ & $0.7540\times 10^{-6}$ & $1.5337$ & $0.2655\times 10^{-6}$  & $1.9912$ & $0.8623\times 10^{-4}$ & $1.2084$ \\
\hline
  \end{tabular}
	\end{table}	}
	\vspace{-0.5cm}
		\setlength{\tabcolsep}{0.3em}
{ \renewcommand{\arraystretch}{1.02}
	\begin{table}[h!]
	\caption{ Maximum error and order of  numerical solution NS2\eqref{App2} of order $2-\aaaa$ of  two-term equation \eqref{TwoTerm2} with $\aaaa=0.3,m=7,s=0.25$, second order numerical solution NS2\eqref{App2} of two-term equation \eqref{TwoTerm2} with $\aaaa=0.5,m=5,s=S_5(0.5)=0.2428$ and  numerical solution NS3\eqref{App2} of order $2-2\aaaa$ of three-term equation \eqref{ThreeTerm} and $\aaaa=0.4,m=6,s=0.2$.}
		\vspace{-0.2cm}
	\small
	\centering
  \begin{tabular}{ |l | c  c | c  c | c  c| }
		\hline
		\multirow{2}*{ $\quad \boldsymbol{h}$}  & \multicolumn{2}{c|}{$\boldsymbol{\aaaa=0.3,s=0.25}$} & \multicolumn{2}{c|}{$\boldsymbol{\aaaa=0.5,s=0.243}$}  & \multicolumn{2}{c|}{$\boldsymbol{\aaaa=0.4,s=0.2}$} \\
		\cline{2-7}
   & $Error$ & $Order$  & $Error$ & $Order$  & $Error$ & $Order$ \\
		\hline
$0.0125$   & $0.9541\times 10^{-5}$ & $1.1714$ & $0.6344\times 10^{-6}$  & $2.0819$ & $0.3177\times 10^{-3}$ & $1.0815$ \\
$0.00625$  & $0.2914\times 10^{-5}$ & $1.7111$ & $0.1528\times 10^{-6}$  & $2.0536$ & $0.1422\times 10^{-3}$ & $1.1597$ \\
$0.003125$ & $0.8909\times 10^{-6}$ & $1.7097$ & $0.3730\times 10^{-7}$  & $2.0344$ & $0.6258\times 10^{-4}$ & $1.1841$ \\
\hline
  \end{tabular}
	\end{table}}

\clearpage
\section*{Acknowledgements}
The third author is supported by the Bulgarian Academy of Sciences through the Program for Career Development of Young Scientists, Grant DFNP-17-88/28.07.2017, Project “Efficient Numerical Methods with an Improved Rate of Convergence for Applied Computational Problems”, by the Bulgarian National Science Fund under Project DN 12/5-2017, Project “Efficient Stochastic Methods and Algorithms for Large-Scale Problems”, by the Bulgarian National Science Fund under Project DN 12/4-2017, Project “Advanced Analytical and Numerical Methods for Nonlinear Differential Equations with Applications in Finance and Environmental Pollution” and by the Bulgarian National Science Fund under Bilateral Project DNTS/Russia 02/12-2018 "Development and investigation of finite-difference schemes of higher order of accuracy for solving applied problems of fluid and gas mechanics, and ecology".
%%%%%%%%%%%%%%%%%%%%%%%%%%%%%%%%%%%%%%
 
 \bigskip \smallskip

 \it

%\clearpage
 \noindent
%Yuri Dimitrov\\
$^1$ Department of Mathematics and Physics \\
University of Forestry \\
10 Sveti Kliment Ohridski Blv., \\
Sofia -- 1756, BULGARIA  \\[4pt]
  e-mail: yuri.dimitrov@ltu.bg
	\hfill Received: September 1, 2018 \\[12pt]
	$^2$ Institute of Mathematics and Informatics,\\
 Bulgarian Academy of Sciences, \\
 Department of Information Modeling, Acad. Georgi Bonchev Str., Block 8,\\
%Venelin Todorov \\
Sofia -- 1113, BULGARIA\\[4pt]
vtodorov@math.bas.bg\\[12pt]
$^2$ Institute of Information and Communication Technologies,\\ Bulgarian Academy of Sciences,\\ 
Department of Parallel Algorithms,  Acad. Georgi Bonchev Str., Block 25A,\\
Sofia -- 1113, BULGARIA\\[4pt]
venelin@parallel.bas.bg\\[12pt]
%Radan Miryanov\\
$^3$ Department of Statistics and Applied Mathematics \\
University of Economics\\
77 Knyaz Boris I Blvd.,\\
Varna -- 9002, BULGARIA  \\[4pt]
 e-mail: miryanov@ue-varna.bg


\begin{thebibliography}{99}
 \normalsize
\bibitem{AhlbergNilson1963} J. H. Ahlberg, E. N. Nilson, Convergence properties of the spline fit. \emph{	J. Soc. Ind. Appl. Math.} \textbf{11}, No 1 (1963), 95--104.
\bibitem{Alikhanov2015} A. A. Alikhanov, A new difference scheme for the time fractional diffusion equation. \emph{	J. Comput. Phys.} {\bf 280}  (2015), 424--438.
\bibitem{ChenDeng2014} M. Chen, W. Deng, A second-order numerical method for two-dimensional two-sided space fractional convection diffusion equation. \emph{Appl. Math. Model.} {\bf38}, No 13 (2014), 3244--3259.
\bibitem{Diethelm2010} K. Diethelm,  \emph{The Analysis of Fractional Differential Equations: An Application-Oriented Exposition Using Differential Operators of Caputo Type.} Springer (2010).
\bibitem{DiethelmSiegmundTuan2017} K. Diethelm, S.  Siegmund, H. T.  Tuan,  Asymptotic behavior of solutions of linear multi-order fractional differential systems. \emph{Fract. Calc. Appl. Anal.} {\bf 20}, No 5 (2017), 1165--1195; DOI: 10.1515/fca-2017-0062;
\bibitem{Dimitrov2015} Y. Dimitrov,  Numerical approximations for fractional differential equations. \emph{J. Fract. Calc. Appl.} {\bf 5}, No 3S (2014) 1--45. 
\bibitem{Dimitrov2016} Y. Dimitrov, A second order approximation for the Caputo fractional derivative. \emph{J. Fract. Calc. Appl.} {\bf 7}, No 2 (2016),  175--195. 
\bibitem{Dimitrov2017} Y. Dimitrov, Three-point approximation for the Caputo fractional derivative. \emph{Communication on Applied Mathematics and Computation} {\bf 31}, No 4 (2017), 413--442.
 \bibitem{Dimitrov2018}  Y. Dimitrov,   Approximations for the Caputo derivative (I). 	\emph{J. Fract. Calc. Appl} {\bf 9}, No 1 (2018), 35--63. 
\bibitem{DimitrovMiryanovTodorov2018} Y. Dimitrov, R. Miryanov, V. Todorov, Asymptotic expansions and approximations of the Caputo Derivative. {\it Comp. Appl. Math.} (2018); DOI: 10.1007/s40314-018-0641-3;
\bibitem{DimitrovDimovTodorov2018} Y. Dimitrov, I. Dimov, V. Todorov, Numerical solutions of ordinary fractional differential equations with singularities. In:  	
\emph{Advanced Computing in Industrial Mathematics}  (2018);
DOI:10.1007/978-3-319-97277-0.%$\_$7.
\bibitem{DingLi2016} H. Ding, C. Li,  High-order algorithms for Riesz derivative and their applications (III). \emph{Fract. Calc. Appl. Anal.} \textbf{19}, No 1 (2016), 19--55; DOI: 10.1515/fca-2016-0003;
\bibitem{DingLi2017}  H. Ding, C. Li, High-order numerical algorithms for Riesz derivatives via constructing new generating functions.  \emph{J. Sci. Comput.} {\bf 71}, No 2 (2017),  759--784.
 \bibitem{Elezovic2005} N. Elezovi\'{c}  Asymptotic expansions of gamma and related functions, binomial coefficients, inequalities and means. \emph{J. Math. Inequal.}  {\bf 9}, No 4  (2005),  1001--1054.
\bibitem{GaoSunZhang2014} G. H. Gao, Z. Z. Sun, H. W. Zhang, A new fractional numerical differentiation formula to approximate the Caputo fractional derivative and its applications. \emph{J. Comput. Phys.}  \textbf{259} (2014), 33--50.
\bibitem{JiSun2015} C.-C. Ji, Z.-Z. Sun, A high-order compact finite difference scheme for the fractional sub-diffusion equation. \emph{J. Sci. Comput.} \textbf{64}, No 3  (2015), 959--985.
\bibitem{JiangZhangZhangZhang2017} S. Jiang, J. Zhang, Q. Zhang, Z. Zhang,  Fast evaluation of the Caputo fractional derivative and its applications to fractional diffusion equations. \emph{Commun. Comput. Phys.} {\bf 21}, No 3  (2017), 650--678.
\bibitem{JinLazarovZhou2016}B. Jin, R. Lazarov, Z. Zhou, An analysis of the L1 scheme for the subdiffusion equation with nonsmooth data. \emph{IMA J. Numer. Anal.} \textbf{36}, No 1 (2016), 197--221.
\bibitem{KumarPandeySharma2018} K. Kumar, R. K. Pandey, S. Sharma, Approximations of fractional integrals and Caputo derivatives with application in solving Abel’s integral equations. \emph{JKSUS} (2018); DOI: 10.1016/j.jksus.2017.12.017.
\bibitem{LiChenYe2011} C. Li, A.  Chen, J.  Ye, Numerical approaches to fractional calculus and fractional ordinary differential equation. \emph{J. Comput. Phys.} {\bf 230}, No 9  (2011), 3352--3368.
\bibitem{LiCaoLi2016} H. Li, J. Cao, C. Li, High-order approximation to Caputo derivatives and
Caputo-type advection–diffusion equations (III). \emph{J. Comput. Appl. Math.} {\bf 299} (2016). 159--175.
\bibitem{LinXu2007} Y. Lin, C. Xu, Finite difference/spectral approximations for the time-fractional diffusion equation. \emph{J. Comput. Phys.} \textbf{225}, No 2 (2007), 1533--1552.
\bibitem{LiZeng2015} C. Li, F. Zeng, \emph{ Numerical methods for fractional calculus.} Chapman and Hall/CRC (2015).
\bibitem{Ma2014} Y. Ma, Two implicit finite difference methods for time fractional diffusion equation with source term. \emph{Journal of Applied Mathematics and Bioinformatics.} \textbf{4}, No 2 (2014), 125--145.
\bibitem{Odibat2006} Z. Odibat, Approximations of fractional integrals and Caputo fractional derivatives. \emph{Appl. Math. Comput.} \textbf{178}, No 2  (2006), 527--533.
\bibitem{Podlubny1999} I. Podlubny, \emph{Fractional Differential Equations.} Academic Press, San Diego (1999).
\bibitem{RenMaoZhang2018} J. Ren, S. Mao, J. Zhang,  Fast evaluation and high accuracy finite element approximation for the time fractional subdiffusion equation. \emph{Numer. Methods Partial Differ. Equ.} {\bf 34}, No 2 (2018), 705--730.
 \bibitem{RenWang2017} L. Ren, Y. M.  Wang,  A fourth-order extrapolated compact difference method for time-fractional convection-reaction-diffusion equations with spatially variable coefficients. \emph{
Appl. Math. Comput.} {\bf 312} (2017), 1--22.
\bibitem{SunWu2006} Z.-Z. Sun, X. Wu, A fully discrete scheme for a diffusion wave system. \emph{Appl. Numer. Math.} \textbf{56}, No 2 (2006), 193--209.
\bibitem{Sousa2012} E. Sousa, A second order explicit finite difference method for the fractional advection diffusion equation. \emph{ Comput. Math. Appl.} {\bf 64}, No 10 (2012), 3141--3152.
\bibitem{TadjeranMeerschaertScheffer2006} C. Tadjeran, M. M. Meerschaert, H. P. Scheffer, A second-order accurate numerical approximation for the fractional diffusion equation. \emph{J. Comput. Phys.} \textbf{213} (2006), 205--213.
\bibitem{TianZhouDeng2015} W. Tian, H. Zhou, W. Deng, A class of second order difference approximation for solving
space fractional diffusion equations, \emph{	Math. Comput.} {\bf 84} (2015), 1703--1727. 
 \bibitem{TricomiErdelyi1951} F. Tricomi, A. Erd\'{e}lyi, The asymptotic expansion of a ratio of gamma functions. \emph{Pac. J. Math.}.  {\bf 1}, No 1  (1951), 133--142.
\bibitem{Varah1975} J. M. Varah, A lower bound for the smallest singular value of a matrix. \emph{Linear Algebra Appl.} \textbf{11}, No 1 (1975), 3--5.
\bibitem{WangVong2014} Z. Wang, S. Vong, Compact difference schemes for the modified anomalous fractional
sub-diffusion equation and the fractional diffusion-wave equation. {\it J. Comput. Phys.} {\bf 277} (2014), 1--15.
\bibitem{Wyss1986} W. Wyss, The fractional diffusion equation.
\emph{	J. Math. Phys.} {\bf 27},  (1986); https://doi.org/10.1063/1.527251
\bibitem{YanPalFord2014} Y. Yan, K. Pal, N. J. Ford, Higher order numerical methods for solving fractional differential equations. \emph{	BIT Numer. Math.} \textbf{54}, No 2  (2014), 555--584.
\bibitem{YanSunZhang2017}Y. Yan, Z.-Z. Sun, J. Zhang, Fast evaluation of the Caputo fractional derivative and its applications to fractional diffusion equations: A second-order scheme. \emph{Commun. Comput. Phys.} {\bf 22}, No 4  (2017), 1028--1048.
\bibitem{ZengLiLiuTurner2015} F. Zeng, C. Li, F. Liu, I. Turner, Numerical algorithms for time-fractional subdiffusion equation with second-order accuracy. \emph{SIAM J. Sci. Comput.}  \textbf{37}, No 1 (2015), A55--A78. 
\bibitem{ZengZhangKarniadakis2017}  F. Zeng,  Z.  Zhang, G. E. Karniadakis, Second-order numerical methods for multi-term fractional differential equations: Smooth and non-smooth solutions. \emph{ Comput. Methods Appl. Mech. Eng.} {\bf 327} (2017),  478--502. 
\end{thebibliography}
\end{document}